\documentclass[10pt]{amsart}

\usepackage{amsmath, amscd, amssymb,xypic, amsfonts, epsfig}
\usepackage[frame,cmtip,arrow,matrix,line,graph,curve]{xy}
\usepackage{graphpap, color, pstricks}
\usepackage[mathscr]{eucal}

\usepackage{mathrsfs}
\usepackage{pstricks}
\usepackage{color}
\usepackage{cancel}
\usepackage{verbatim}
\usepackage{pstricks}
\usepackage{color}
\usepackage{cancel}

\numberwithin{equation}{section}

\newcommand{\CC}{\mathbb{C}}

\newcommand{\LL}{\mathbb{L}}
\newcommand{\PP}{\mathbb{P}}
\newcommand{\QQ}{\mathbb{Q}}

\newcommand{\ZZ}{\mathbb{Z}}



\newcommand{\TT}{\mathbb{T}}

\newcommand{\cal}{\mathcal}

\def\cD{{\cal D}}

\def\cH{{\cal H}}

\def\cL{{\cal L}}

\def\cO{{\cal O}}

\def\cU{{\cal U}}







\def\mapright#1{\,\smash{\mathop{\lra}\limits^{#1}}\,}





\def\virt{^{\mathrm{vir}}}


\def\lra{\longrightarrow}

\def\lsta{_{\ast}}




\def\begeq{\begin{equation}}
\def\endeq{\end{equation}}
\def\and{\quad{\rm and}\quad}

\def\sub{\subset}

\def\Po{{\mathbb P^1}}
\def\and{\quad\text{and}\quad}
\def\mapright#1{\,\smash{\mathop{\lra}\limits^{#1}}\,}




\newtheorem{prop}{Proposition}[section]
\newtheorem{theo}[prop]{Theorem}
\newtheorem{lemm}[prop]{Lemma}
\newtheorem{coro}[prop]{Corollary}
\newtheorem{rema}[prop]{Remark}
\newtheorem{exam}[prop]{Example}
\newtheorem{ques}[prop]{Question}
\newtheorem{defi}[prop]{Definition}

\def\beq{\begin{equation}}
\def\eeq{\end{equation}}


\def\loc{_{\mathrm{loc}}}

\def\PP{\mathbb{P}}
\def\CC{\mathbb{C}}

\def\lra{\longrightarrow}
\def\mapright#1{\,\smash{\mathop{\lra}\limits^{#1}}\,}
\def\cO{\mathcal{O}}

\def\cX{\mathcal{X} }

\def\bbA{\mathbb{A} }

\title{Poincar\'e invariants are Seiberg-Witten invariants}
\author{Huai-liang Chang}
\address{Department of Mathematics, Hong Kong University of Science and Technology, Hong Kong} \email{mahlchang@ust.hk}
\author{Young-Hoon Kiem}
\address{Department of Mathematics and Research Institute
of Mathematics, Seoul National University, Seoul 151-747, Korea}
\email{kiem@math.snu.ac.kr}
\thanks{HLC was partially supported by Hong Kong GRF grant 600711. YHK was partially supported by Korea NRF grant 2011-0027969}


\begin{document}
\begin{abstract}
We prove a conjecture of D\"urr, Kabanov and Okonek which provides an algebro-geometric theory of Seiberg-Witten invariants for all smooth projective surfaces. Our main technique is the cosection localization principle (\cite{KLi-cosection}) of virtual cycles. 
\end{abstract}

\maketitle

\def\loc{_{\mathrm{loc}} }
\section{Introduction}

Recently there has been a renewed interest in Donaldson invariants and Seiberg-Witten invariants due to the influx of virtual intersection theory. See \cite{Moch, GNY, DKO} for instance. The purpose of this paper is to prove a conjecture (Theorem \ref{thref1} below) of D\"urr, Kabanov and Okonek in \cite{DKO} which provides a natural algebro-geometric theory of Seiberg-Witten invariants. Our main technique is the cosection localization principle in \cite{KLi-cosection} that effectively localizes the virtual cycle when there is a cosection of the obstruction sheaf. 

In mid 1980s, Donaldson defined his famous invariants as intersection numbers on the Uhlenbeck compactification of the space of anti-self-dual (ASD) connections on a fixed hermitian vector bundle of rank 2 on a compact oriented 4-manifold  $X$ (\cite{Dona}). Because of difficulty in calculating Donaldson invariants, an algebro-geometric theory of Donaldson invariants was highly anticipated from the beginning. Donaldson proved that when the 4-manifold $X$ is an algebraic surface over $\CC$, there is a diffeomorphism between the space of irreducible ASD connections and an open subset of the moduli space of Gieseker semistable sheaves of rank 2 and given Chern classes. In 1991, J. Li (\cite{JLi}) and Morgan (\cite{Morg}) extended Donaldson's diffeomorphism to a continuous map from the Gieseker moduli space of semistable sheaves to the Uhlenbeck compactification  and proved that Dondalson invariants are intersection numbers on the Gieseker moduli space.  In fact, J. Li furthermore proved that the Uhlenbeck compactification admits a scheme structure and the map from the Gieseker moduli space to the Uhlenbeck compactificaiton is an algebraic morphism. In 1993, Kronheimer and Mrowka proved the celebrated structure theorem which expresses all the Donaldson invariants in terms of a finite number of classes $K_1,\cdots, K_l \in H^2(X,\ZZ)$ and rational numbers $\alpha_1,\cdots,\alpha_l\in \QQ$ if $X$ is of simple type (\cite{KrMr}). The condition of being a simple type roughly means that the point insertions do not provide new information on $X$. The mystery of the simple type condition, the basic classes $K_1,\cdots, K_l$ and the rational numbers $\alpha_1,\cdots,\alpha_l$ was elucidated by the advent of Seiberg-Witten theory in 1994.

A $Spin^c$-structure on a 4-manifold $X$ refers to a pair of rank 2 hermitian vector bundles $E^\pm$ such that $\det E_+\cong \det E_-=:L$. Taking the first Chern class of $L$ provides us with a bijection from the collection of all $Spin^c$-structures on $X$ to $H^2(X,\ZZ)$.
Seiberg and Witten stated a pair of equations on a pair $(A, \varphi)$ where $A$ is a connection on $L$  and $\varphi$ is a section of $E_+$. The collection of all solutions of Seiberg-Witten equations forms a compact topological space and Seiberg-Witten invariants are defined as intersection numbers on the solution space. 
In 1994, Witten in \cite{Witt} conjectured that every K\"ahler surface $X$ with a nontrivial holomorphic 2-form $\theta\in H^0(K_X)$ is of simple type and that  for any K\"ahler surface $X$ of simple type, \begin{enumerate}\item the basic classes $K_1,\cdots, K_l$ of Kronheimer and Mrowka satisfy $$K_i\cdot(K_i -k_X)=0\quad\forall i\quad\text{where }k_X=c_1(T^*_X)$$
\item the Seiberg-Witten invariants $SW(\gamma)$ are zero if $\gamma\cdot(\gamma-k_X)\ne 0$;
\item the rational numbers $\alpha_i$ in the structure theorem of Kronheimer and Mrowka are the Seiberg-Witten invariants $SW(K_i)$ upto a constant which depends only on $b_1(X),b_2^\pm(X)$.
\end{enumerate}
Furthermore, Witten showed by physical means that the calculation of Seiberg-Witten invariants may be localized to a neighborhood of a canonical divisor when $p_g(X)>0$. (See \cite[p.12]{Witt}, \cite[p.54]{Dona-Bull}) 

When $X$ is a K\"ahler surface with $b_1(X)=0$, it was observed by Witten (\cite[p.18]{Witt}) that the solution space of Seiberg-Witten equations with fixed $\gamma=c_1(L)\in H^2(X,\ZZ)$ is a projective space $\PP H^0(X,L)$ and a theorem of Friedman and Morgan \cite[Theorem 3.1]{FM} shows that the Seiberg-Witten invariants in this case are the integrals of cohomology classes multiplied by the Euler class of a certain vector bundle. Hence in the special case of $b_1(X)=0$, we have an algebro-geometric theory of Seiberg-Witten invariants. Using this, T. Mochizuki in \cite{Moch} proved a formula that expresses the Donaldson invariants in terms of the Seiberg-Witten invariants of surfaces with $b_1(X)=0$. Subsequently in \cite{GNY}, G\"ottsche, Nakajima and Yoshioka proved that Mochizuki's formula implies Witten's conjecture for algebraic surfaces with $b_1(X)=0$. However this beautiful story could not be generalized to the case where $b_1(X)>0$ because we still lack in an algebro-geometric definition of Seiberg-Witten invariants. Moreover, the proofs of Mochizuki and G\"ottsche-Nakajima-Yoshioka do not seem to explain the localization behavior of Seiberg-Witten invariants to a canonical divisor.

In 2007, D\"urr, Kabanov and Okonek proved in \cite{DKO} that if $X$ is a smooth projective surface and $\gamma\in H_2(X,\ZZ)$, the Hilbert scheme $Hilb^\gamma_X$ of divisors $D$ on $X$ whose homology classes are $\gamma$ admits a perfect obstruction theory and thus we obtain a virtual fundamental class $[Hilb^\gamma_X]\virt$ by \cite{LT, BF}. By integrating cohomology classes over $[Hilb^\gamma_X]\virt$, they defined new invariants of $X$ called the \emph{Poincar\'e invariants} and conjectured that the Poincar\'e invariants coincide with the Seiberg-Witten invariants for algebraic surfaces. See \S3 for more details. Our main result in this paper is that the following conjecture of D\"urr, Kabanov and Okonek in \cite{DKO} is true. 

\begin{theo}\label{thref1} The Poincar\'e invariants are the Seiberg-Witten invariants for all smooth projective surfaces. \end{theo}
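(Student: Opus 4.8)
\medskip
\noindent The plan is to split according to whether $p_g(X)=h^0(X,K_X)$ is positive or zero, mirroring Witten's dichotomy $b_+(X)>1$ versus $b_+(X)=1$. When $p_g(X)>0$ the idea is to run the cosection localization principle of \cite{KLi-cosection} on the Dürr--Kabanov--Okonek obstruction theory of $Hilb^\gamma_X$, which is the algebro-geometric shadow of Witten's localization of Seiberg--Witten invariants near a canonical divisor. Concretely, fix $0\ne\theta\in H^0(X,K_X)$; at a divisor $D$ in class $\gamma$ the obstruction space is $H^1(D,\cO_D(D))$, which by Serre duality and adjunction is dual to $H^0(D,K_X|_D)$, and $\theta|_D$ is an element of the latter. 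Pairing against $\theta|_D$ (constructed globally from the universal divisor and checked to be compatible with the two-term complex) gives a cosection
\[
\sigma_\theta\colon \cO b_{Hilb^\gamma_X}\lra \cO_{Hilb^\gamma_X},
\]
whose degeneracy locus $Z(\sigma_\theta)$ consists of the effective divisors in class $\gamma$ dominated by the canonical divisor $\mathrm{div}(\theta)$. Cosection localization then produces $[Hilb^\gamma_X]\virt=\iota_*[Hilb^\gamma_X]\virt_{\mathrm{loc}}$ with $\iota\colon Z(\sigma_\theta)\hookrightarrow Hilb^\gamma_X$. In particular $P_X(\gamma)=0$ unless both $\gamma$ and $k_X-\gamma$ are represented by effective divisors, matching the known vanishing of $SW_X(\gamma)$ for $b_+>1$; and when $Z(\sigma_\theta)\ne\emptyset$ one identifies $[Hilb^\gamma_X]\virt_{\mathrm{loc}}$ — whose obstruction sheaf is $\ker\sigma_\theta$ — with the semiregularity obstruction bundle used by Friedman--Morgan \cite{FM} in their algebro-geometric evaluation of the Kähler Seiberg--Witten invariant, which (with the correct orientation) yields $P_X(\gamma)=SW_X(\gamma)$.

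\medskip
\noindent When $p_g(X)=0$ we have $b_+(X)=1$: no global two-form is available, the Seiberg--Witten invariant depends on a chamber, and it satisfies a wall-crossing formula. One shows first that the Poincaré invariants obey the same wall-crossing relation — this is visible from the obstruction theory of $Hilb^\gamma_X$ and the Serre-duality symmetry $\gamma\leftrightarrow k_X-\gamma$ already noted in \cite{DKO} — and then matches the two invariants in a single chamber: for a polarization $\omega$ and $\gamma\cdot\omega\ll0$ the Hilbert scheme is empty so $P_X(\gamma)=0$, while immediately past a wall $Hilb^\gamma_X$ is a projective bundle over $\mathrm{Pic}^\gamma(X)$ whose virtual class is an explicit Euler class agreeing with the Seiberg--Witten wall-crossing term. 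Alternatively one reduces to minimal models by the blow-up formula, valid on both sides, together with deformation invariance, and disposes of the finitely many Kodaira types with $p_g=0$ directly; the subcase $b_1(X)=0$ is already contained in \cite{FM,Moch,GNY}.

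\medskip
\noindent The main obstacle is the last identification in the case $p_g>0$: proving that the cosection-localized cycle on $Z(\sigma_\theta)$ is exactly the algebraic model of the Seiberg--Witten count near a canonical divisor, i.e.\ that $\ker\sigma_\theta$ is the Friedman--Morgan obstruction bundle and that the orientation induced by cosection localization matches the gauge-theoretic one; this requires comparing the two deformation complexes through the Dolbeault form of the Seiberg--Witten equations on a Kähler surface. A secondary difficulty, when $b_1(X)>0$, is that the wall-crossing term involves $H^1(X,\cO_X)\ne 0$ and the Picard torus, so one must verify that the contribution coming from the virtual class of $Hilb^\gamma_X$ over $\mathrm{Pic}^\gamma(X)$ carries the same cohomological form as the Seiberg--Witten wall-crossing term.
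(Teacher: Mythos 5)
Your proposal correctly reproduces the cosection construction from $\theta\in H^0(K_X)$ and the localization of $[Hilb^\gamma_X]\virt$ to the divisors dominated by $\mathrm{div}(\theta)$, which is indeed the paper's starting point. But it has a genuine gap, and you have in fact flagged it yourself: the entire content of the theorem, after localization, is the \emph{evaluation} of the localized class, and your route to that evaluation --- identifying $\ker\sigma_\theta$ with the Friedman--Morgan semiregularity bundle and matching orientations with the gauge-theoretic deformation complex --- is left as ``the main obstacle'' rather than carried out. Note also that the Friedman--Morgan computation you want to invoke is only available when $b_1(X)=0$ (where the Seiberg--Witten moduli space is a projective space $\PP H^0(X,L)$); the whole point of the conjecture is the case $b_1(X)>0$, so this identification cannot simply be cited. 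The paper avoids this entirely by using the reduction already proved in \cite{DKO}: deformation invariance, the blow-up formula and the wall-crossing formulas for Poincar\'e invariants (which you propose to re-derive in your $p_g=0$ branch) are established there, and they reduce Theorem \ref{thref1} to the single numerical statement $\deg[Hilb^{k_X}_X]\virt=(-1)^{\chi(\cO_X)}$ for minimal surfaces of general type with $p_g>0$. So the $p_g=0$ case and the general-$\gamma$ comparison never need to be addressed.

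What then remains, and what is entirely absent from your proposal, is the actual computation at $\gamma=k_X$. The paper shows the cosection's zero locus is the single \emph{reduced} point $\{Z\}$ (using that $K_X$ is big and nef to compare the two constructions of the cosection and to control the scheme structure), and then uses the Green--Lazarsfeld theory of higher obstructions to deforming cohomology of line bundles to write explicit local equations for $Hilb^{k_X}_X$ near the canonical linear system: the fiber of $Hilb^{k_X}_X\to Pic^{k_X}(X)$ directions are cut out by a skew-symmetric matrix $B_{jk}=\sum_i z_i a_{ijk}$ built from the cup product $H^1(\cO_X)\times H^1(\cO_X)\to H^2(\cO_X)$, whose kernel therefore has dimension of fixed parity equal to that of $q=h^1(\cO_X)$. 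This produces a canonical divisor $Z$ that is a smooth point of $Hilb^{k_X}_X$ with $\dim_Z Hilb^{k_X}_X\equiv \chi(\cO_X)\pmod 2$, whereupon \cite[Example 2.9]{KLi-cosection} gives $[Hilb^{k_X}_X]\virt\loc=(-1)^{\chi(\cO_X)}[Z]$. Without this parity argument (or your unproven Friedman--Morgan identification), the sign $(-1)^{\chi(\cO_X)}$ --- and hence the match with the Seiberg--Witten invariant --- is not established.
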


This theorem gives us a completely algebro-geometric definition of Seiberg-Witten invariants for all smooth projective surfaces and can be thought of as a natural generalization of \cite[Theorem 3.1]{FM}. Since $Hilb^\gamma_X$ parameterizes embedded curves, the Poincar\'e invariants may be viewed as ``algebro-geometric Gromov invariants" and Theorem \ref{thref1} may be considered as an algebraic version of Taubes's Theorem (\cite{Taubes}).

The authors of \cite{DKO} proved deformation invariance, blow-up formula and wall crossing formulas for the Poincar\'e invariants and reduced the proof of Theorem \ref{thref1} to the following (\cite[p.286]{DKO}).

\begin{theo}\label{prref1} Let $X$ be a minimal surface of general type.
If $p_g(X) >0$, $$\deg [Hilb^{k_X}_X]\virt = (-1)^{\chi(\cO_X)}$$ where $k_X$ is the homology class of a canonical divisor.
\end{theo}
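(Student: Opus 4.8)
The plan is to analyze the Hilbert scheme $Hilb^{k_X}_X$ directly using the minimality and general type hypotheses, and to apply the cosection localization principle of \cite{KLi-cosection} to show that the virtual cycle is supported on a very small locus. First I would recall that for $X$ minimal of general type with $p_g(X)>0$, the canonical linear system is effective and the canonical model contracts only the $(-2)$-curves; in particular, every effective divisor $D$ with $[D]=k_X$ is a canonical divisor, and $H^0(X,\cO_X(D)) \cong H^0(X,K_X)$, $H^1(X,\cO_X(D))=0$ is \emph{not} automatic — rather one has $h^0(\cO_X(D)) = p_g(X)$ and by Serre duality $h^2(\cO_X(D)) = h^0(\cO_X) = 1$, so the obstruction space $H^1(\cO_D(D))$ governing the deformation theory of $D \subset X$ is controlled by the exact sequence $0 \to \cO_X \to \cO_X(D) \to \cO_D(D) \to 0$. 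The key point is that the nontrivial $2$-form $\theta \in H^0(K_X)$, viewed via Serre duality, produces a canonical cosection of the obstruction sheaf of $Hilb^{k_X}_X$: concretely, cup product with $\theta$ and the trace map give a surjection $H^1(\cO_D(D)) \to H^1(K_X|_D \otimes \cdots)$ whose cokernel-free locus is exactly where $\theta$ vanishes on $D$.

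Next I would identify the degeneracy locus of this cosection. Since $D$ is a canonical divisor and $\theta$ spans (generically) a different section of $K_X$, the cosection $\sigma$ is nowhere vanishing on $Hilb^{k_X}_X$ unless the universal divisor meets the zero locus of $\theta$. When $p_g(X) > 1$ one can choose $\theta$ so that no canonical divisor contains the divisor of $\theta$, forcing the cosection to be surjective everywhere, so by \cite{KLi-cosection} the localized virtual cycle lives in $H_0$ of the \emph{empty} locus and the degree would be $0$ — which is wrong, signalling that the correct statement is that the cosection is only \emph{meromorphic}/degenerate along a nonempty divisorial locus, and the localization is to the zero scheme of $\theta$ itself. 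I would therefore instead use the cosection localization to reduce $[Hilb^{k_X}_X]\virt$ to a virtual cycle supported on the sublocus of divisors $D$ that are \emph{pulled back from the canonical model}, equivalently contain the exceptional $(-2)$-configuration; when $p_g(X)>0$ the canonical map $X \to X_{can}$ has connected fibers and the generic canonical divisor is smooth, so this locus is a single reduced point (the unique canonical divisor class, rigidified by $h^1(\cO_D(D))=0$ there after the cosection has killed the obstruction). The degree then equals the local contribution at that point.

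Finally I would compute that local contribution. At a smooth canonical divisor $D$ the obstruction space is $H^1(\cO_D(D))$, which by the sequence above and Serre duality on $X$ is $H^1(\cO_X(D))^{\text{corrected}} \oplus H^2(\cO_X) = \CC$ (one-dimensional), while the Zariski tangent space $H^0(\cO_D(D)) = H^0(K_X)/\CC\theta$ has dimension $p_g(X)-1 = \dim Hilb^{k_X}_X$. After cosection localization the effective obstruction bundle becomes the kernel of $\sigma$, whose rank drops by exactly one, making the reduced obstruction theory have virtual dimension matching the honest dimension of the localized locus; the remaining Euler-class contribution is $(-1)^{h^1(\cO_D(D))} \cdot (\text{top Chern class of a bundle of the right rank})$, and a Riemann–Roch / Hirzebruch computation on $X$ shows this Euler number equals $(-1)^{\chi(\cO_X)}$. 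The main obstacle I anticipate is making the cosection construction rigorous and functorial over the whole Hilbert scheme — in particular verifying that cup product with $\theta$ genuinely lifts to a morphism from the obstruction \emph{sheaf} (not just pointwise on cohomology), and correctly identifying its degeneracy locus so that the localized class is exactly a sum of local Euler contributions at isolated points; the sign bookkeeping, tracking $(-1)^{\chi(\cO_X)}$ through the perfect obstruction theory and the localized Gysin map, is delicate but routine once the geometric picture is pinned down.
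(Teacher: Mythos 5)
Your first paragraph is on the right track: the cosection is indeed built by composing the connecting map $H^1(\cO_D(D))\to H^2(\cO_X)$ with multiplication by $\theta$, and this does lift to a sheaf homomorphism on the obstruction sheaf $R^1\pi_*\cO_{\cD}(\cD)$. But from the second paragraph on there are genuine gaps. First, you misidentify the degeneracy locus. The cosection at $D$ vanishes if and only if $0\le D\le Z$ where $Z=\mathrm{zero}(\theta)$ (the map factors through $H^2(\cO_X(D))\to H^2(K_X)$ exactly when $\theta$ is divisible by the section cutting out $D$); it has nothing to do with $Z$ being contained in $D$, with the canonical model, or with $(-2)$-configurations. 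For $\gamma=k_X$ the condition $0\le D\le Z$ together with equality of Hilbert polynomials forces $D=Z$, and the same Hilbert-polynomial argument applied to flat families shows the zero \emph{scheme} of the cosection is the reduced point $\{Z\}$. Your detour through ``the cosection is surjective everywhere, so the degree would be $0$'' and the subsequent pivot to divisors pulled back from the canonical model would not survive being made precise.

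Second, and more seriously, the local computation in your last paragraph is wrong when $q=h^1(\cO_X)>0$, and the mechanism that actually produces the sign $(-1)^{\chi(\cO_X)}$ is absent. The tangent space at $Z$ is not $H^0(K_X)/\CC\theta$ and the obstruction space $H^1(\cO_Z(Z))$ is not one-dimensional: both acquire contributions from the kernel and cokernel of the cup-product map $\theta\cup(-):H^1(\cO_X)\to H^1(K_X)$. The essential point, which no Riemann--Roch computation will give you, is a \emph{parity} statement: by Green--Lazarsfeld's description of the deformations of cohomology of line bundles, an analytic neighborhood of the canonical divisors in $Hilb^{k_X}_X$ is cut out by the bilinear equations $\sum_{i,k}z_i a_{ijk}t_k=0$ coming from the cup product $H^1(\cO_X)\times H^1(\cO_X)\to H^2(\cO_X)$; since the matrix $B_{jk}=\sum_i z_i a_{ijk}$ is skew-symmetric, its rank is even, so one can find a canonical divisor $Z$ at which $Hilb^{k_X}_X$ is \emph{smooth} of dimension congruent to $p_g+q-1\equiv\chi(\cO_X)\pmod 2$. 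Only then does the cosection-localized class become $(-1)^{\dim}[Z]=(-1)^{\chi(\cO_X)}[Z]$ by the model computation for a rank-$n$ obstruction bundle with a cosection vanishing at a simple point of a smooth $n$-fold. Your proposal never establishes smoothness of the Hilbert scheme at any canonical divisor, never invokes the skew-symmetry, and therefore has no way to pin down the sign; ``delicate but routine sign bookkeeping'' is precisely the step that requires the Green--Lazarsfeld input.
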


The main technique for our proof of Theorem \ref{thref1} and Theorem \ref{prref1} is the cosection localization principle (\cite{KLi-cosection}) which tells us that if there is a cosection
\[\sigma: \cO b_M\lra \cO_M \]
of the obstruction sheaf $\cO b_M=h^1(E^\vee)$ of a perfect obstruction theory $\phi:E\to \mathbb{L}_M$ over a Deligne-Mumford stack $M$, then the virtual fundamental class of $(M,\phi)$ localizes to the zero locus of $\sigma$. Suppose $p_g(X)>0$ and let $\theta\in H^0(X,K_X)$ be a nonzero holomorphic 2-form on $X$ whose vanishing locus is denoted by $Z$. For $D\in Hilb^\gamma_X$, the obstruction space at $D$ with respect to the perfect obstruction theory in \cite{DKO} is $H^1(\cO_D(D))$. From the short exact sequence $$0\lra \cO_X\lra \cO_X(D)\lra \cO_D(D)\lra 0$$
we obtain a connecting homomorphism $H^1(\cO_D(D))\to H^2(\cO_X)$. Upon composing with the multiplication by $\theta$, we obtain a homomorphism
\[ Ob_{Hilb^\gamma_X,D}=H^1(\cO_D(D))\lra H^2(\cO_X)\mapright{\theta} H^2(\cO_X(Z))\cong H^2(K_X)=\CC.  \]
Obviously this construction can be lifted to a cosection \[\sigma:\cO b_{Hilb^\gamma_X}\cong R^1\pi_*\cO_{\cD}(\cD)\lra \cO_{Hilb^\gamma_X}\]
where 
\[\xymatrix{
\cD\ar@{^(->}[r]\ar[dr] & Hilb^\gamma_X\times X\ar[d]^\pi\\ & Hilb^\gamma_X}\] is the universal family of divisors on $X$. The vanishing locus of the cosection $\sigma$ is precisely the locus of $D\in Hilb^\gamma_X$ satisfying $0\le D\le Z$. Therefore the virtual fundamental class is localized to the locus of effective divisors contained in $Z$ and  the calculation of the Poincar\'e invariants takes place within the canonical divisor $Z$, exactly as Witten told us about localization of Seiberg-Witten invariants mentioned above.

When $\gamma=k_X$, we will see that the vanishing locus of $\sigma$ consists of exactly one reduced point $Z$. Hence the virtual cycle of $Hilb^{k_X}_X$ is localized to a neighborhood of the point $Z$. 
By using the results of Green and Lazarsfeld (\cite{GL1, GL2}) on deforming cohomology groups of line bundles, we will find local defining equations near canonical divisors and show that there is a canonical divisor $Z$ which is a smooth point of $Hilb^{k_X}_X$ such that $\dim T_ZHilb^{k_X}_X$ has the same parity as $\chi(\cO_X)$. By \cite[Example 2.9]{KLi-cosection}, this implies that the (localized) virtual cycle of $Hilb^{k_X}_X$ is $(-1)^{\chi(\cO_X)}[Z]$ whose degree is precisely $(-1)^{\chi(\cO_X)}$. This proves Theorem \ref{prref1}. 

By Theorem \ref{thref1}, Mochizuki's formula in \cite[Chapter 7]{Moch} expresses the Donaldson invariants in terms of the Seiberg-Witten invariants. Therefore one may be able to generalize the arguments of \cite{GNY} to answer the following interesting question.
\begin{ques} Does Mochizuki's formula imply Witten's conjecture for all smooth projective surfaces $X$ with $p_g(X)>0$?
\end{ques}
We hope to get back to this question in the future.

\bigskip

\noindent \textbf{Acknowledgement}. We are grateful to Jun Li for many useful discussions.

\bigskip

\def\cX{\mathcal{X} }
\def\bbA{\mathbb{A} }
\section{Localization of virtual cycles by cosections}\label{sec2}
In this section we collect necessary materials on the cosection localization principle from \cite{KLi-cosection}. 
\begin{defi} Let $M$ be a Deligne-Mumford stack over $\CC$. Let $\LL_M$ denote the cotangent complex of $M$.
A \emph{perfect obstruction theory} on $M$ is a morphism $\phi:E\to \LL_M$ in the derived category $D^b(M)$ of bounded complex of coherent sheaves on $M$ such that 
\begin{enumerate}\item $E$ is locally isomorphic to a two-term complex of locally free sheaves concentrated at $[-1,0]$;
\item $h^{-1}(\phi)$ is surjective and $h^0(\phi)$ is an isomorphism.\end{enumerate}
The \emph{obstruction sheaf} of $(M,\phi)$ is defined as $\cO b_M=h^1(E^\vee)$ where $E^\vee$ denotes the dual of $E$. A \emph{cosection} of the obstruction sheaf $\cO b_M$ is a homomorphism $\cO b_M\to \cO_M$.
\end{defi}
By the construction in \cite{BF, LT}, a perfect obstruction theory $\phi$ on $M$ gives rise to a \emph{virtual fundamental class} $[M]\virt$ and many well-known invariants (such as Gromov-Witten and Donaldson-Thomas invariants) are defined as intersection numbers on the virtual fundamental classes of suitable moduli spaces. The cosection localization principle of \cite{KLi-cosection} is
 a powerful technique of calculating these virtual intersection numbers.
\begin{theo} (\cite[Theorem 1.1]{KLi-cosection})\label{thref2} Suppose there is a \emph{surjective} cosection $\sigma:\cO b_M|_U\to \cO_U$ over an open $U\subset M$. Let $M(\sigma)=M-U$. Then the virtual fundamental class localizes to $M(\sigma)$ in the sense that there exists a \emph{localized virtual fundamental class}
\[ [M]\virt\loc\in A_*(M(\sigma))\]
which enjoys the usual properties of the virtual fundamental classes and such that 
\[ \imath_*[M]\virt\loc=[M]\virt\in A_*(M)\quad\text{where }\imath:M(\sigma)\hookrightarrow M.\]
\end{theo}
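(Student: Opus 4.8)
The plan is to obtain Theorem~\ref{thref2} by refining the Behrend--Fantechi construction of $[M]\virt$ so that the Gysin pull-back used to define it already lands on $M(\sigma)$. Recall from \cite{BF} that the perfect obstruction theory $\phi\colon E\to\LL_M$ produces, via intrinsic deformation to the normal cone, a closed embedding $\fC_M\hookrightarrow\fE$ of the intrinsic normal cone into the vector bundle stack $\fE=h^1/h^0(E^\vee)$, and that $[M]\virt=0^!_\fE[\fC_M]$, where $0^!_\fE\colon A_*(\fE)\to A_*(M)$ is the Gysin homomorphism of the zero section. The whole task is thus to replace $0^!_\fE$ by a cosection-localized homomorphism $0^!_{\fE,\mathrm{loc}}$ with values in $A_*(M(\sigma))$.

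First I would extract geometric structure from $\sigma$. Since $\sigma$ is a map out of $\cO b_M=h^1(E^\vee)$, on an \'etale chart a local presentation $E^\vee\simeq[E_0\to E_1]$ lets me lift $\sigma$ to a bundle map $E_1|_U\to\cO_U$ annihilating the image of the differential, so $\sigma$ descends to a morphism of vector bundle stacks $\bar\sigma\colon\fE|_U\to\mathbb{A}^1_U$. Patching these gives a well-defined closed ``kernel cone stack'' $\fE(\sigma):=\ker(\bar\sigma)\cup\fE|_{M(\sigma)}\subset\fE$, of corank one over $U$ and equal to $\fE$ over $M(\sigma)$. The key structural input is the inclusion $\fC_M\subset\fE(\sigma)$, i.e.\ that $\fC_M|_U$ lies in $\ker\bar\sigma$; I would check this in local charts $M\hookrightarrow V$ with $V$ smooth, where $\fC_M=[C_{M/V}/T_V|_M]$ and the compatibility of $\phi$ with $\LL_M$ forces every local lift of $\sigma$ to vanish on the embedded normal cone $C_{M/V}$. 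As a warm-up this already proves a weak form of the statement: over $U$ the quotient line bundle $\fE|_U/\ker\bar\sigma$ is trivialized by $\bar\sigma$, so the excess intersection formula gives $0^!_\fE[\fC_M]|_U=0$, whence $[M]\virt$ lies in the image of $A_*(M(\sigma))\to A_*(M)$; what is missing is a \emph{canonical} preimage.

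The heart of the argument is to construct the localized Gysin homomorphism $0^!_{\fE,\mathrm{loc}}\colon A_*(\fE(\sigma))\to A_*(M(\sigma))$, prove it is independent of all choices, and set $[M]\virt\loc:=0^!_{\fE,\mathrm{loc}}[\fC_M]$, so that $\imath_*[M]\virt\loc=0^!_\fE[\fC_M]=[M]\virt$. Using the standard reduction of intersection theory on vector bundle stacks to honest bundles over an \'etale atlas (due to Kresch, Vistoli), the problem localizes to: given a scheme $Y$, a vector bundle $E$, a cosection $\sigma\colon E\to\cO_Y$ surjective over $U=Y\setminus Y(\sigma)$, and a cone $C\subset E(\sigma)$, produce a canonical class $0^!_{E,\mathrm{loc}}[C]\in A_*(Y(\sigma))$ whose image in $A_*(Y)$ is $0^!_E[C]$. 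Over $U$ the cone $C|_U$ lies inside the genuine sub-bundle $\ker(\sigma|_U)$; I would run Fulton's deformation-to-the-normal-cone definition of $0^!_E$ inside $\ker(\sigma|_U)$ and use $\bar\sigma$ to propagate the resulting Segre-class data across $Y(\sigma)$ --- concretely by passing to a blow-up of $Y$ along the degeneracy ideal of $\sigma$, on which $\ker\sigma$ extends to a bona fide sub-bundle, taking closures, intersecting there and pushing down, while checking the outcome is supported on $Y(\sigma)$ and is blow-up independent. Finally I would re-prove in the localized setting the formal properties Behrend--Fantechi establish for $[M]\virt$ (compatibility with smooth/flat base change and with the non-localized class, deformation invariance, the virtual-pull-back and torus-localization formalisms), so that $[M]\virt\loc$ ``enjoys the usual properties.''

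The main difficulty will be precisely that $\ker\bar\sigma$ does not extend over $M(\sigma)$: there is no single ambient bundle in which to perform one Gysin pull-back, so $0^!_{\fE,\mathrm{loc}}$ must be assembled from local data and proved well-defined under changes of \'etale chart, of local presentation of $\fE$, of the lift $\bar\sigma$ and of the auxiliary blow-ups --- the discrepancies between competing constructions must be exhibited as cycles rationally equivalent to zero on $M(\sigma)$, which I would handle by $\mathbb{A}^1$-homotopies and iterated deformation arguments. A secondary difficulty, feeding back into the inclusion $\fC_M\subset\fE(\sigma)$, is that one must use the full strength of the perfect-obstruction-theory hypothesis rather than merely the existence of \emph{some} cosection, and must deal with $M$ that is far from being a local complete intersection.
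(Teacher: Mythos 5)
This statement is quoted verbatim from \cite[Theorem 1.1]{KLi-cosection}; the present paper gives no proof of it, so there is no internal argument to compare against. Your outline is essentially the strategy of that reference: the two pillars there are exactly the ones you name, namely the cone-reduction lemma (the intrinsic normal cone $\fC_M$ is supported in the kernel cone stack $\fE(\sigma)$) and the construction of a cosection-localized Gysin map $0^!_{\fE,\mathrm{loc}}$ via a blow-up making the degeneracy locus of $\sigma$ a Cartier divisor so that $\ker\sigma$ extends to a subbundle. As a proposal it is sound, with the genuinely hard points correctly identified but deferred --- in particular, be aware that the containment $\fC_M|_U\subset\ker\bar\sigma$ is not a formal consequence of ``compatibility of $\phi$ with $\LL_M$'' but requires the obstruction-assignment interpretation of the perfect obstruction theory, and that the well-definedness of $0^!_{\fE,\mathrm{loc}}$ under changes of chart, presentation and blow-up is where most of the labor in \cite{KLi-cosection} lies.
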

See \cite{KLi-cosection, KLi-surf1, KLi-surf2} for direct applications of Theorem \ref{thref2} to Gromov-Witten invariants of surfaces. From the construction of $[M]\virt\loc$ in \cite{KLi-cosection}, the following excision property follows immediately. 
\begin{prop}
Let $W$ be an open neighborhood of $M(\sigma)$ in $M$. Then we have $$[W]\virt\loc=[M]\virt\loc\in A_*(M(\sigma)).$$
\end{prop}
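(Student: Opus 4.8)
The plan is to unwind the construction of $[M]\virt\loc$ from \cite{KLi-cosection} and check that every step is compatible with restriction to the open substack $W$; equivalently, one verifies that the localized virtual class is functorial under the flat open immersion $j\colon W\hookrightarrow M$. The starting observation is that, because $M(\sigma)\subseteq W$, the degeneracy locus of the restricted cosection is
\[W(\sigma)\;=\;W\setminus(W\cap U)\;=\;W\cap M(\sigma)\;=\;M(\sigma),\]
so $[W]\virt\loc$ and $[M]\virt\loc$ \emph{a priori} lie in the same group $A_*(M(\sigma))$, and the flat pullback $j^!$ acts on $A_*(M(\sigma))$ as the identity (the preimage of $M(\sigma)$ under $j$ is $M(\sigma)$ and $j$ has relative dimension $0$), so it is meaningful to compare the two classes directly.

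Next I would assemble the restriction compatibilities of the ingredients. Since $W\subseteq M$ is open, $\LL_M|_W=\LL_W$, hence $\phi|_W\colon E|_W\to\LL_W$ is a perfect obstruction theory on $W$ with $\cO b_W=\cO b_M|_W$, and $\sigma$ restricts to a cosection $\sigma|_W$ surjective over $W\cap U$. The bundle stack $\mathfrak{E}=h^1/h^0(E^\vee)$ and the closed substack $\mathfrak{E}(\sigma)\subseteq\mathfrak{E}$ of \cite{KLi-cosection} (the kernel sub-bundle-stack of the lifted cosection over $U$, and all of $\mathfrak{E}$ over $M(\sigma)$) pull back to their analogues over $W$, with $\mathfrak{E}_W(\sigma)=\mathfrak{E}(\sigma)\times_M W$ open in $\mathfrak{E}(\sigma)$. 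The intrinsic normal cone is local on the base, so $\mathfrak{C}_W=\mathfrak{C}_M\times_M W$; since open immersions are flat, the cone cycles satisfy $[\mathfrak{C}_M]|_W=[\mathfrak{C}_W]$, and by the construction in \cite{KLi-cosection} both lie in $A_*(\mathfrak{E}(\sigma))$, resp.\ $A_*(\mathfrak{E}_W(\sigma))$.

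The crux is that the cosection-localized Gysin homomorphism
\[0^!_{E,\mathrm{loc}}\colon A_*(\mathfrak{E}(\sigma))\lra A_*(M(\sigma))\]
commutes with open restriction, i.e.\ $0^!_{E|_W,\mathrm{loc}}\!\bigl(\zeta|_W\bigr)=0^!_{E,\mathrm{loc}}(\zeta)$ for all $\zeta\in A_*(\mathfrak{E}(\sigma))$ — here one uses $W(\sigma)=M(\sigma)$ so the target is literally unchanged. This is the one place where one must enter the construction of \cite{KLi-cosection}: the localized Gysin map is built from the pair $(E,\sigma)$ by a deformation-to-the-normal-cone argument together with a localized Euler-class contribution attached to the section of $E^\vee$ induced by $\sigma$, whose zero locus is $M(\sigma)$; every one of these operations is local over $M$ and returns a class supported over $M(\sigma)$, so nothing occurring over $M\setminus W$ — which is disjoint from $M(\sigma)$ — influences the outcome. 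I expect this bookkeeping (matching the deformation and the localized Euler class formed over $M$ with those formed over $W$) to be the only genuine point, and it is precisely the content of the phrase ``from the construction of $[M]\virt\loc$ in \cite{KLi-cosection}''. Granting it, the proposition is immediate: applying the displayed identity to $\zeta=[\mathfrak{C}_M]$ and using $[\mathfrak{C}_M]|_W=[\mathfrak{C}_W]$,
\[[W]\virt\loc=0^!_{E|_W,\mathrm{loc}}[\mathfrak{C}_W]=0^!_{E|_W,\mathrm{loc}}\!\bigl([\mathfrak{C}_M]|_W\bigr)=0^!_{E,\mathrm{loc}}[\mathfrak{C}_M]=[M]\virt\loc\quad\text{in }A_*(M(\sigma)).\]
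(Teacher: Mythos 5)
Your proposal is correct and follows the same route as the paper, which simply asserts that the excision property ``follows immediately from the construction of $[M]\virt\loc$ in \cite{KLi-cosection}''; your write-up is exactly the bookkeeping behind that assertion. The key observations — that $W(\sigma)=M(\sigma)$, that the intrinsic normal cone and the obstruction theory restrict along the open immersion, and that the cosection-localized Gysin map is built from data local over $M$ and supported over $M(\sigma)$ — are precisely what make the claim immediate.
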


The following special case will be useful.
\begin{exam}(\cite[Example 2.9]{KLi-cosection})\label{exref1}
Let $M$ be an $n$-dimensional smooth scheme and $E$ be a vector bundle of rank $n$ on $M$. 
The zero map $0:\TT_M\to E$ is a perfect obstruction theory with obstruction sheaf $E$. Let $\sigma:E\to \cO_M$ be a cosection whose vanishing locus $\sigma^{-1}(0)$ is a simple point $p$ in $M$. Then  $[M]\virt\loc=(-1)^n[p]$.
\end{exam}


\section{Poincar\'e invariants}\label{sec3}
In this section, we recall the definition of Poincar\'e invariants from \cite{DKO} as virtual intersection numbers on the Hilbert scheme $Hilb^\gamma_X$ of divisors on a smooth projective surface $X$ with $p_g(X)>0$. For any nonzero $\theta\in H^0(X,K_X)$, we construct a cosection $\sigma_\theta: \cO b_{Hilb^\gamma_X}\to \cO_{Hilb^\gamma_X}$ of the obstruction sheaf and show that the vanishing locus of $\sigma$ is the locus of divisors $D$ contained in the zero locus $Z$ of $\theta$.

\subsection{Perfect obstruction theory on Hilbert scheme}\label{sec3.1}
In this subsection, we recall the perfect obstruction theory on the Hilbert scheme $Hilb^\gamma_X$ of divisors on $X$ and the Poincar\'e invariants from \cite{DKO}. 

By combining Theorem 1.7 and Theorem 1.11 of \cite{DKO}, we obtain the following.
\begin{theo}\label{thref5} Let $\cX\to S$ be a flat quasi-projective morphism of relative dimension $2$ and $\gamma\in H_2(\cX,\ZZ)$. Let $Hilb^\gamma_{\cX/S}$ be (the open subscheme of) the relative Hilbert scheme parameterizing Cartier divisors $D$ of fibers of $\cX\to S$ with $[D]=\gamma\in H_2(\cX,\ZZ)$. Let 
\[\xymatrix{ \cD\ar@{^(->}[r]\ar[dr] & Hilb^\gamma_{\cX/S}\times_S\cX\ar[r]\ar[d]^\pi & \cX\ar[d]\\
& Hilb^\gamma_{\cX/S} \ar[r] & S}\]
be the universal family. 
Then we have a relative perfect obstruction theory
\[
\phi:(R\pi_*\cO_\cD(\cD))^\vee\lra \LL_{Hilb^\gamma_{\cX/S}/S}
\]
for the morphism $Hilb^\gamma_{\cX/S}\to S$.
\end{theo}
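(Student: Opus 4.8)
The plan is to construct the perfect obstruction theory for $Hilb^\gamma_{\cX/S}\to S$ out of the deformation theory of Cartier divisors, following the standard recipe for Hilbert schemes of divisors but keeping track of relativity over $S$. First I would recall that a Cartier divisor $D$ on a fiber $\cX_s$ is the same as the data of a line bundle $\cO_{\cX_s}(D)$ together with a section $\sigma$ cutting out $D$, considered up to scaling; equivalently, $D$ corresponds to a surjection $\cO_{\cX_s}\to \cO_D$ with kernel an invertible ideal sheaf $\cO_{\cX_s}(-D)$. The tangent--obstruction theory of the Hilbert scheme of $D$ in $\cX_s$ is governed by $\Ext^\bullet_{\cO_{\cX_s}}(\cO_D,\cO_D)$, but the locally free resolution $0\to \cO_{\cX_s}(-D)\to \cO_{\cX_s}\to \cO_D\to 0$ lets us replace this by the hypercohomology of the two-term complex obtained by applying $\mathcal{H}om(-,\cO_D)$, which identifies the deformations with $H^0(\cO_D(D))$ and the obstructions with $H^1(\cO_D(D))$. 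This is the punctual statement; the content of the theorem is to globalize and relativize it.

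The key step is to produce the morphism $\phi:(R\pi_*\cO_\cD(\cD))^\vee\to \LL_{Hilb^\gamma_{\cX/S}/S}$ in $D^b(Hilb^\gamma_{\cX/S})$ and check the two axioms of Definition \ref{defi} in this excerpt (local quasi-isomorphism to a two-term complex in degrees $[-1,0]$, surjectivity on $h^{-1}$ and isomorphism on $h^0$). I would do this by invoking the general machinery that attaches a relative perfect obstruction theory to a moduli problem with a "virtual tangent complex" — here, the Hilbert scheme of divisors maps to the relative Picard stack $\mathrm{Pic}(\cX/S)$ by $D\mapsto \cO_\cX(D)$, and the fibers of $Hilb^\gamma_{\cX/S}\to \mathrm{Pic}$ are the linear systems $\PP(\pi_*\cO_\cX(D))$; the relative cotangent complex can be assembled from the cotangent complex of the Picard stack (which contributes $R\pi_*\cO_\cX[1]$ in the right degrees) and the Euler sequence of the projective bundle (which contributes $\pi_*\cO_\cX(\cD)$), and the long exact sequence of $0\to\cO_\cX\to\cO_\cX(\cD)\to\cO_\cD(\cD)\to 0$ under $R\pi_*$ glues these into $R\pi_*\cO_\cD(\cD)$. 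Concretely, $\cD\subset Hilb^\gamma_{\cX/S}\times_S\cX$ is a relative effective Cartier divisor over $Hilb^\gamma_{\cX/S}$, so its ideal sheaf $\cI_\cD\cong\cO(-\cD)$ is invertible and one has a canonical truncated Atiyah/Illusie class $\LL_{\cD/(Hilb\times_S\cX)}\to\cI_\cD/\cI_\cD^2[1]=\cO_\cD(-\cD)[1]$, and dualizing and pushing forward along the projection $\pi$ (which is proper, since $\cX\to S$ is quasi-projective and divisors in a fixed class have proper image) produces exactly $(R\pi_*\cO_\cD(\cD))^\vee$ with a map to $\LL_{Hilb^\gamma_{\cX/S}/S}$.

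The main obstacle — and really the only nontrivial point, since the existence of the map is formal — is verifying that $\phi$ satisfies axiom (2), i.e. that it genuinely controls the deformation theory of $Hilb^\gamma_{\cX/S}$ over $S$: that $h^0(\phi)$ is an isomorphism identifying first-order deformations with $H^0(\cO_D(D))$ relatively, and that $h^{-1}(\phi)$ is surjective with cokernel-free obstruction space $H^1(\cO_D(D))$. Here I would compare with the classical obstruction calculus for the Hilbert functor: a square-zero extension $T\hookrightarrow \bar T$ over $S$ with ideal $J$, and a $T$-point of $Hilb^\gamma_{\cX/S}$ given by $\cD_T\subset \cX_T$, admits an extension over $\bar T$ if and only if a class in $H^1(\cO_{\cD_T}(\cD_T))\otimes J$ (the image of the Atiyah obstruction) vanishes, and the extensions form a torsor under $H^0$; matching this against the abstract obstruction theory coming from $\LL_{Hilb^\gamma_{\cX/S}/S}$ via the standard dictionary (Behrend--Fantechi) gives axiom (2). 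Axiom (1) — that $(R\pi_*\cO_\cD(\cD))^\vee$ is, locally on $Hilb^\gamma_{\cX/S}$, a perfect complex in amplitude $[-1,0]$ — follows because $\cO_\cD(\cD)$ is flat over $Hilb^\gamma_{\cX/S}$ with support proper of relative dimension $1$ over the base, so $R\pi_*\cO_\cD(\cD)$ is perfect of amplitude $[0,1]$, hence its dual sits in $[-1,0]$. I would also remark that the only place relativity over $S$ (rather than working fiber by fiber) requires care is base change: one needs $R\pi_*\cO_\cD(\cD)$ to commute with base change on $Hilb^\gamma_{\cX/S}$ so that the fiber of the obstruction sheaf $h^1((R\pi_*\cO_\cD(\cD))^\vee)$ at $[D]$ is literally $H^1(\cO_D(D))$, which is exactly what we use in \S\ref{sec3} to build the cosection $\sigma_\theta$; this holds by cohomology and base change since everything is flat and the relevant fibers are curves.
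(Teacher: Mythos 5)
The paper does not actually prove this statement: it is quoted verbatim as the combination of Theorems 1.7 and 1.11 of \cite{DKO}, so there is no in-paper argument to compare yours against line by line. Judged on its own terms, your outline is essentially the standard (and, as far as the construction goes, essentially DKO's) argument, and it is correct in structure: the map $\phi$ comes from the Atiyah/Illusie class of the relative Cartier divisor $\cD\subset Hilb^\gamma_{\cX/S}\times_S\cX$, i.e.\ from $\LL_{\cD/(Hilb^\gamma_{\cX/S}\times_S\cX)}\cong \cO_\cD(-\cD)[1]$ and the connecting map to $\pi^*\LL_{Hilb^\gamma_{\cX/S}/S}[1]$, followed by Grothendieck duality for the proper relative curve $\cD\to Hilb^\gamma_{\cX/S}$ to land in $(R\pi_*\cO_\cD(\cD))^\vee$; axiom (2) is checked against the classical obstruction calculus for the Hilbert functor via the Behrend--Fantechi criterion \cite{BF}; axiom (1) follows from perfection of $R\pi_*\cO_\cD(\cD)$ in amplitude $[0,1]$. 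Your alternative packaging via the Abel--Jacobi map to $\mathrm{Pic}(\cX/S)$ and the long exact sequence of $0\to\cO\to\cO(\cD)\to\cO_\cD(\cD)\to 0$ is also the way \cite{DKO} motivate the answer, though you should commit to one of the two constructions rather than gesture at both.

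Two small points to tighten. First, the punctual deformation theory of the Hilbert scheme at $D$ is \emph{not} governed by $\Ext^\bullet_{\cO_{\cX_s}}(\cO_D,\cO_D)$ (which contains the extra summands $H^i(\cO_D)$ coming from deforming $\cO_D$ as an abstract sheaf rather than as a quotient of $\cO_{\cX_s}$); it is governed by $\Hom(\cI_D/\cI_D^2,\cO_D)=H^0(\cO_D(D))$ with obstructions in $H^1(\cO_D(D))$, which is what you end up using --- the detour through $\Ext^\bullet(\cO_D,\cO_D)$ should be dropped or corrected. Second, properness of $\cD\to Hilb^\gamma_{\cX/S}$ is not a consequence of the divisors ``having proper image'' but is built into the definition of the Hilbert functor for a quasi-projective morphism (only subschemes proper over the base are parameterized); this is worth stating, since both the duality step and the base-change statement for $R^1\pi_*\cO_\cD(\cD)$ rely on it. Neither issue is a genuine gap.
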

When $S=\mathrm{Spec}\, \CC$ and $\imath:\cD\hookrightarrow Hilb^\gamma_{X}\times X$ is a Cartier divisor (e.g. when $X$ is smooth), we have a perfect obstruction theory on $Hilb^\gamma_X$ whose obstruction sheaf is
\[ \cO b_{Hilb^\gamma_X} = R^1\pi_*\cO_\cD(\cD) \]
and by \cite{BF,LT} we obtain a virtual fundamental class $[Hilb^\gamma_X]\virt$ if $X$ is projective. We will see below if there is a nonzero section $\theta\in H^0(K_X)$ for smooth $X$, there is a cosection 
\[ \sigma_\theta:\cO b_{Hilb^\gamma_X}\lra \cO_{Hilb^\gamma_X}\]
which enables us to define the localized virtual fundamental class $[Hilb^\gamma_X]\virt\loc$ supported on the zero locus $Z$ of $\theta$.

Let $X$ be a smooth projective surface. Then it is easy to find that the virtual dimension of $Hilb^\gamma_X$ is precisely $\gamma\cdot(\gamma-k_X)$ where $k_X=c_1(K_X)$. 
The Poincar\'e invariants for $X$ are now defined as intersection numbers on $[Hilb^\gamma_X]\virt$ but the precise definition is not necessary in this paper. See \cite[\S0]{DKO} for the precise definition.

It was conjectured in \cite{DKO} that the Poincar\'e invariants for $X$ coincide with the Seiberg-Witten invariants. Furthermore, the authors of \cite[p.286]{DKO} proved that the conjecture follows if 
$$ \deg [Hilb^{k_X}_X]\virt=(-1)^{\chi(\cO_X)} $$
for minimal surfaces of general type with $p_g>0$ (Theorem \ref{prref1}).

\subsection{Cosection of the obstruction sheaf}\label{sec3.2}

Suppose $p_g(X)>0$ and fix a nonzero holomorphic 2-form $\theta\in H^0(X,K_X)$ on $X$ whose vanishing locus is denoted by $Z$. For $D\in Hilb^\gamma_X$, the obstruction space at $D$ with respect to the perfect obstruction theory in \cite{DKO} is $H^1(\cO_D(D))$. From the short exact sequence $$0\lra \cO_X\lra \cO_X(D)\lra \cO_D(D)\lra 0$$
we obtain a connecting homomorphism $H^1(\cO_D(D))\to H^2(\cO_X)$. Upon composing with the multiplication by $\theta$, we obtain a homomorphism
\begin{equation}\label{eqref2}\sigma_\theta: Ob_{Hilb^\gamma_X,D}=H^1(\cO_D(D))\lra H^2(\cO_X)\mapright{\theta} H^2(\cO_X(Z))= H^2(K_X)=\CC.  \end{equation}
 
This construction can be lifted to a cosection $\sigma:\cO b_{Hilb^\gamma_X}\to \cO_{Hilb^\gamma_X}$ as follows. Let
\[\xymatrix{
\cD\ar@{^(->}[r]\ar[dr] & Hilb^\gamma_X\times X\ar[d]_{\pi}\\ & Hilb^\gamma_X}\] be the universal family and let \begin{equation}\label{eqref1} 0\lra \cO_{Hilb^\gamma_X\times X}\lra \cO_{Hilb^\gamma_X\times X}(\cD)\lra \cO_{\cD}(\cD)\lra 0\end{equation} be the obvious short exact sequence.
Recall from \cite[\S1]{DKO} that our perfect obstruction theory of $Hilb^\gamma_X$ is
$$\phi:(R\pi_*\cO_{\cD}(\cD))^\vee\lra \mathbb{L}_{Hilb^\gamma_X}$$ and thus the obstruction sheaf of $(Hilb^\gamma_X,\phi)$ is $$\cO b_{Hilb^\gamma_X}= R^1\pi_*\cO_{\cD}(\cD).$$
From \eqref{eqref1}, we obtain a homomorphism $$R^1\pi_*\cO_{\cD}(\cD)\lra R^2\pi_*\cO_{Hilb^\gamma_X\times X}$$ and by composing it with the multiplication by the pullback $p_X^*\theta$ of $\theta$ via the projection $p_X:Hilb^\gamma_X\times X\to X$ we obtain the cosection 
\begin{equation}\label{eqref5}\sigma_\theta:\cO b_{Hilb^\gamma_X}\cong R^1\pi_*\cO_{\cD}(\cD)\lra R^2\pi_*\cO_{Hilb^\gamma_X\times X}\mapright{p_X^*\theta} R^2\pi_*p_X^*K_X=\cO_{Hilb^\gamma_X}\end{equation}
\begin{lemm}\label{lem-DegIsLoc} The vanishing locus of the cosection $\sigma_\theta$ is precisely the locus of $D\in Hilb^\gamma_X$ satisfying $0\le D\le Z$. \end{lemm}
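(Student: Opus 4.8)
The plan is to analyze the pointwise cosection $\sigma_\theta$ in \eqref{eqref2} and determine exactly when it vanishes; since $\sigma_\theta$ on $\cO b_{Hilb^\gamma_X}$ restricts at each $D$ to the map \eqref{eqref2}, the vanishing locus of the sheaf cosection is the set of $D$ where \eqref{eqref2} is the zero map. So I will fix $D\in Hilb^\gamma_X$ and unwind the composition $H^1(\cO_D(D))\to H^2(\cO_X)\xrightarrow{\theta} H^2(K_X)=\CC$. The first step is to use Serre duality: $H^2(\cO_X)\cong H^0(K_X)^\vee$ and $H^1(\cO_D(D))\cong H^1(\cO_D(-D)\otimes K_X|_D\otimes\cdots)$—more precisely I will dualize the whole picture. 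Dualizing the short exact sequence $0\to\cO_X\to\cO_X(D)\to\cO_D(D)\to 0$ after twisting by $K_X$ gives, via Serre duality on $X$, that the connecting map $H^1(\cO_D(D))\to H^2(\cO_X)$ is dual to the restriction map $H^0(K_X)\to H^0(K_X|_D)=H^0(K_X\otimes\cO_D)$. Under this identification, the functional $\sigma_\theta(D):H^1(\cO_D(D))\to\CC$ is (up to nonzero scalar) the element of $H^1(\cO_D(D))^\vee\cong H^0(K_X\otimes\cO_D)$ given by the image $\theta|_D$ of $\theta$ under restriction $H^0(K_X)\to H^0(K_X\otimes\cO_D)$.

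Granting that identification, $\sigma_\theta(D)=0$ if and only if $\theta|_D=0$ in $H^0(K_X\otimes\cO_D)$, i.e. $\theta$ vanishes on the subscheme $D$. Now I translate this into a containment of divisors: since $Z$ is the zero divisor of $\theta$ (with $\cO_X(Z)\cong K_X$), the section $\theta$ restricted to $D$ vanishes identically precisely when $\theta$, viewed as a section of $K_X=\cO_X(Z)$, lies in the ideal $\cO_X(Z-D)\subset\cO_X(Z)$—equivalently when $D\le Z$ as effective Cartier divisors. Combined with the fact that $D\in Hilb^\gamma_X$ is automatically effective, $D\ge 0$, this says exactly $0\le D\le Z$. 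I will spell this out using the commutative square relating $H^0(\cO_X(Z))\to H^0(K_X\otimes\cO_D)$ and the inclusion of $D$ in $Z$; a section of $\cO_X(Z)$ restricts to zero on $D$ iff it factors through $\cO_X(Z-D)$, i.e. iff its divisor of zeros dominates $D$.

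The main obstacle I anticipate is getting the duality/functoriality bookkeeping exactly right: the connecting homomorphism $H^1(\cO_D(D))\to H^2(\cO_X)$ must be identified, under Serre duality, with the transpose of the restriction $H^0(K_X)\to H^0(K_X|_D)$ coming from $0\to\cO_X(-D)\to\cO_X\to\cO_D\to 0$ twisted by $K_X$, and one must check that composing with multiplication by $\theta$ corresponds on the dual side to evaluating the restriction map at $\theta$. This is a standard but slightly delicate compatibility between the boundary map of a short exact sequence and Serre duality (the relevant statement is that Serre duality converts the long exact sequence of $0\to\cO_X(-D)\to\cO_X\to\cO_D\to 0$ tensored with $K_X$ into the Serre-dual of the long exact sequence of $0\to\cO_X\to\cO_X(D)\to\cO_D(D)\to 0$). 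Once this is in hand, the rest is the elementary equivalence ``$\theta|_D=0$'' $\iff$ ``$D\le Z$'', and the lemma follows. I will also note that since everything in \eqref{eqref5} is constructed by applying $R\pi_*$ to the relative version \eqref{eqref1}, the pointwise computation at each $D$ suffices to identify the vanishing locus of the sheaf cosection by Nakayama/base change, as the formation of the relevant $R^1\pi_*$ and $R^2\pi_*$ is compatible with restriction to fibers.
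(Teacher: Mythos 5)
Your proposal is correct and follows essentially the same route as the paper: both arguments reduce the vanishing of the composite $H^1(\cO_D(D))\to H^2(\cO_X)\xrightarrow{\theta}H^2(K_X)$ via Serre duality and the long exact sequence to the statement that $\theta$ lies in $H^0(K_X(-D))$, i.e.\ that $\theta$ is divisible by the section $s$ cutting out $D$, i.e.\ $0\le D\le Z$. The only cosmetic difference is that you dualize the connecting homomorphism itself (identifying $\sigma_\theta(D)$ with $\theta|_D\in H^0(K_X|_D)$), whereas the paper dualizes the factorization of $\theta:H^2(\cO_X)\to H^2(K_X)$ through $H^2(\cO_X(D))$ into a factorization of $\theta:H^0(\cO_X)\to H^0(K_X)$ through $H^0(K_X-D)$; both come down to the same compatibility of Serre duality with the boundary maps.
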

\begin{proof} Let $s\in H^0(\cO_X(D))$ be a nonzero section whose vanishing locus is $D$.
We have to show that \eqref{eqref2} vanishes if and only if $D\le Z$. From the short exact sequence \[ 0\lra \cO_X\lra \cO_X(D)\lra \cO_D(D)\lra 0,\]
\eqref{eqref2} vanishes  if and only if the second homomorphism in \eqref{eqref2} factors as \[ \theta: H^2(\cO_X)\mapright{s} H^2(\cO_X(D))\lra H^2(\cO_X(Z))=H^2(K_X)\]
By taking the duals, we find that this is equivalent to saying that $\theta:H^0(\cO_X)\to H^0(K_X)$ factors as
\[ \theta:H^0(\cO_X)\lra H^0(K_X-D)\mapright{s} H^0(K_X).\]
Let $s'$ be the image of the first homomorphism and $D'$ be the zero locus of $s'$. Then we have $\theta=ss'$ and $Z=D+D'$ with $D'\ge 0$. This certainly implies that \eqref{eqref2} vanishes if and only if $0\le D\le Z$. 
\end{proof}

By the cosection localization principle in \cite{KLi-cosection} (see \S\ref{sec2}), we obtain the following.
\begin{prop} Let $Hilb^\gamma_X(Z)$ be the locus of $D\in Hilb^\gamma_X$ with $0\le D\le Z$ and let $\imath:Hilb^\gamma_X(Z)\hookrightarrow Hilb^\gamma_X$ denote the inclusion morphism.
There exists a localized virtual fundamental class $[Hilb^\gamma_X]\virt_{\mathrm{loc}}\in A_*(Hilb^\gamma_X(Z))$ such that $\imath_*[Hilb^\gamma_X]\virt_{\mathrm{loc}}\in A_*(Hilb^\gamma_X)$ is the ordinary virtual fundamental class $[Hilb^\gamma_X]\virt$ in \S\ref{sec3.1}. Furthermore, if $W$ is an open neighborhood of $Z$ in $Hilb^\gamma_X$, we have $[W]\virt\loc=[Hilb^\gamma_X]\virt_{\mathrm{loc}}$.
\end{prop}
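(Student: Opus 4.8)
The plan is to obtain this Proposition directly from the cosection localization machinery of \S\ref{sec2} together with Lemma \ref{lem-DegIsLoc}, with no further geometric input. First I would put $U:=Hilb^\gamma_X\setminus Hilb^\gamma_X(Z)$ and check that $\sigma_\theta$ restricts to a \emph{surjective} cosection $\sigma_\theta|_U:\cO b_{Hilb^\gamma_X}|_U\to\cO_U$. Because the target is a line bundle, surjectivity of the morphism of coherent sheaves $\sigma_\theta$ can be tested fibrewise by Nakayama's lemma, and the locus where it holds is open; moreover, since $\cD$ has relative dimension one over $Hilb^\gamma_X$, cohomology and base change give an isomorphism $R^1\pi_*\cO_\cD(\cD)\otimes k(D)\to H^1(\cO_D(D))$, so the fibre of $\sigma_\theta$ at $D$ is exactly the homomorphism \eqref{eqref2}. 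By Lemma \ref{lem-DegIsLoc} this homomorphism is nonzero precisely when $D\notin Hilb^\gamma_X(Z)$, so $Hilb^\gamma_X(Z)$ is a closed subset of $Hilb^\gamma_X$ and coincides with the degeneracy locus $M(\sigma)=M\setminus U$ of Theorem \ref{thref2} for $M=Hilb^\gamma_X$ equipped with the cosection $\sigma_\theta$.

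Granting this, Theorem \ref{thref2} applied to $(Hilb^\gamma_X,\phi)$ and $\sigma_\theta$ produces a localized virtual fundamental class $[Hilb^\gamma_X]\virt\loc\in A_*(Hilb^\gamma_X(Z))$ with $\imath_*[Hilb^\gamma_X]\virt\loc=[Hilb^\gamma_X]\virt$, where $\imath:Hilb^\gamma_X(Z)\hookrightarrow Hilb^\gamma_X$; this is the first assertion. For the remaining clause I would invoke the excision property recalled in \S\ref{sec2}: an open $W\supset Hilb^\gamma_X(Z)$ carries the restrictions of $\phi$ and of $\sigma_\theta$, the latter still surjective over $W\cap U$ with degeneracy locus $Hilb^\gamma_X(Z)$, so the construction of \cite{KLi-cosection} gives $[W]\virt\loc=[Hilb^\gamma_X]\virt\loc$ in $A_*(Hilb^\gamma_X(Z))$. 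In the application to $\gamma=k_X$ one checks in the proof of Theorem \ref{prref1} that $Hilb^{k_X}_X(Z)=\{Z\}$, so there an open neighborhood of the point $Z$ is the same as an open neighborhood of the degeneracy locus.

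I do not expect a genuine obstacle: the substantive content was already in Lemma \ref{lem-DegIsLoc}. The one point requiring care is the identification of the degeneracy locus of $\sigma_\theta$ with $Hilb^\gamma_X(Z)$ — in particular that it is no larger — and this follows from Lemma \ref{lem-DegIsLoc} together with the elementary fact that surjectivity of a morphism of coherent sheaves onto a line bundle is a fibrewise, hence open, condition. Everything else is a verbatim application of Theorem \ref{thref2} and of the excision Proposition of \S\ref{sec2}.
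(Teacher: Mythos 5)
Your proposal is correct and follows exactly the route the paper takes: the paper offers no separate proof, simply deducing the Proposition from Theorem \ref{thref2} and the excision property of \S\ref{sec2} once Lemma \ref{lem-DegIsLoc} identifies the degeneracy locus of $\sigma_\theta$ with $Hilb^\gamma_X(Z)$. The extra details you supply (fibrewise surjectivity via Nakayama, base change for the top direct image, and the remark that for $\gamma=k_X$ a neighborhood of the point $Z$ is a neighborhood of the whole degeneracy locus) are all sound and merely make explicit what the paper leaves implicit.
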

Therefore the calculation of the Poincar\'e invariants takes place near a canonical divisor $Z$. This is consistent with Witten's claim about localization of Seiberg-Witten invariants to a canonical divisor (\cite{Witt,Dona-Bull}).

Suppose $\gamma=k_X:=c_1(K_X)\in H^2(X,\ZZ)$. Then it is obvious that $Hilb^\gamma_X(Z)=\{Z\}$ as a set. Moreover, the virtual dimension $\gamma\cdot(\gamma-k_X)$ of $Hilb^\gamma_X$ is $0$.
\begin{coro}\label{cor-locdeg}
When $\gamma=k_X$, the localized virtual fundamental class $[Hilb^\gamma_X]\virt_{\mathrm{loc}}$ of $Hilb^\gamma_X$ is supported at a single point $\{Z\}$ and the Poincar\'e invariant of $Hilb^{k_X}_X$ is the degree of $[Hilb^\gamma_X]\virt_{\mathrm{loc}}$. \end{coro}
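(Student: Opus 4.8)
The plan is to read the corollary off from Lemma~\ref{lem-DegIsLoc}, the cosection localization Proposition (the consequence of Theorem~\ref{thref2} recorded above), and the elementary bookkeeping of virtual dimensions, with no further geometric input. First I would note that when $\gamma = k_X$ the virtual dimension $\gamma\cdot(\gamma - k_X)$ of $Hilb^{k_X}_X$ is $0$, so that $[Hilb^{k_X}_X]\virt$ lies in $A_0(Hilb^{k_X}_X)$ and the Poincar\'e invariant of $Hilb^{k_X}_X$ --- which for this homology class carries no cohomology insertions, the expected dimension being $0$ --- is by definition $\deg [Hilb^{k_X}_X]\virt$.

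Next I would pin down the support. By Lemma~\ref{lem-DegIsLoc} the degeneracy locus $Hilb^{k_X}_X(Z)$ of $\sigma_\theta$ is the set of divisors $D$ with $0 \le D \le Z$; for such $D$ the difference $Z - D$ is effective with class $[Z] - [D] = k_X - k_X = 0$ in $H^2(X,\ZZ)$, and an effective divisor of trivial class on a projective surface is zero (it meets an ample class in degree $0$), so $D = Z$. Hence $Hilb^{k_X}_X(Z) = \{Z\}$ as a set. The cosection localization Proposition then provides $[Hilb^{k_X}_X]\virt_{\mathrm{loc}} \in A_*(\{Z\})$; since $A_*$ of a point is concentrated in degree $0$, independently of any nonreduced structure $Hilb^{k_X}_X(Z)$ may carry, and the virtual dimension is $0$, this localized class lies in $A_0(\{Z\}) \cong \ZZ$ and is an integer multiple of $[Z]$.

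Finally I would invoke the pushforward identity $\imath_*[Hilb^{k_X}_X]\virt_{\mathrm{loc}} = [Hilb^{k_X}_X]\virt$ from the Proposition: as $\imath:\{Z\}\hookrightarrow Hilb^{k_X}_X$ is a closed immersion, hence proper, it preserves degrees, so $\deg [Hilb^{k_X}_X]\virt = \deg [Hilb^{k_X}_X]\virt_{\mathrm{loc}}$, and together with the first step this identifies the Poincar\'e invariant with $\deg [Hilb^{k_X}_X]\virt_{\mathrm{loc}}$. There is no genuine obstacle in this corollary --- all the substance sits in Lemma~\ref{lem-DegIsLoc} and Theorem~\ref{thref2} --- and the only mild subtlety, that $Hilb^{k_X}_X(Z) = \{Z\}$ holds only set-theoretically, is harmless for Chow groups and degrees. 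The actual computation that this degree equals $(-1)^{\chi(\cO_X)}$ is the content of Theorem~\ref{prref1} and lies outside the scope of the corollary.
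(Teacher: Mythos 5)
Your proposal is correct and follows essentially the same route as the paper, which treats the corollary as an immediate consequence of Lemma~\ref{lem-DegIsLoc}, the localization proposition, and the vanishing of the virtual dimension $k_X\cdot(k_X-k_X)=0$. The only difference is that you spell out the step the paper calls ``obvious'' --- that $0\le D\le Z$ with $[D]=[Z]$ forces $D=Z$ because an effective divisor of trivial class on a projective surface is zero --- which is a welcome addition, not a deviation.
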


\subsection{Another construction of the cosection}\label{sec3.25} There is another way to define the cosection $$\sigma=\sigma_\theta:\cO b_{Hilb^\gamma_X}\lra \cO_{Hilb^\gamma_X}.$$ Let $\theta\in H^0(X,K_X)$ be a nonzero section and $D\in Hilb^{\gamma}_X$ be a divisor with $c_1(\cO_X(D))=\gamma$. Since $D$ is a local complete intersection with normal bundle $\cO_D(D)$, 
$D$ is a Cohen-Macaulay scheme (\cite[II, Proposition 8.23]{Hart}) and the adjunction formula tells us that the dualizing sheaf of $D$ is given by
\begin{equation}\label{eqref6}\omega_D=K_X\otimes \cO_D(D).\end{equation}
Multiplication by $\theta$ gives us a map 
\[ \sigma|_D:H^1(\cO_D(D))\mapright{\theta} H^1(K_X\otimes \cO_D(D))\cong H^1(\omega_D) \cong H^0(\cO_D)^\vee\]
from the obstruction space $H^1(\cO_D(D))$ at $D\in Hilb^{\gamma}_X$. As in \S\ref{sec3.2}, this construction can be lifted to the whole space as
\begin{equation}\label{eqref4}\sigma:R^1\pi_*\cO_\cD(\cD)\lra R^1\pi_*(K_X\otimes \cO_\cD(\cD))= R^1\pi_*(\omega_{\cD/Hilb^\gamma_X})\cong \pi_*\cO_{\cD}^\vee.\end{equation}

\begin{lemm}\label{leref1} Suppose $H^1(K_X(D))=0$ for all $D\in Hilb^\gamma_X$ and $\gamma\ne 0$. Then $\pi_*\cO_{\cD}\cong \cO_{Hilb^\gamma_X}$ and \eqref{eqref4} gives us a cosection which coincides with \eqref{eqref5}.
\end{lemm}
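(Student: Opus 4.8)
The plan is to first establish $\pi_*\cO_{\cD}\cong \cO_{Hilb^\gamma_X}$ and then to identify the two constructions \eqref{eqref5} and \eqref{eqref4} via the short exact sequence \eqref{eqref1}. For the first claim, I would start from the structure sequence $0\to\cO_X(-D)\to\cO_X\to\cO_D\to 0$ on $X$ for each $D$, and more globally from $0\to\cO_{Hilb^\gamma_X\times X}(-\cD)\to\cO_{Hilb^\gamma_X\times X}\to\cO_\cD\to 0$, pushing forward along $\pi$. Since $\gamma\ne 0$ we have $H^0(\cO_X(-D))=0$ for every $D$, so $\pi_*\cO_{Hilb^\gamma_X\times X}(-\cD)=0$; combined with $\pi_*\cO_{Hilb^\gamma_X\times X}=\cO_{Hilb^\gamma_X}$ (because $X$ is projective and connected, so $H^0(\cO_X)=\CC$, and one may invoke cohomology and base change or simply the projection formula), the long exact sequence gives an injection $\cO_{Hilb^\gamma_X}\hookrightarrow\pi_*\cO_\cD$. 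To see this is an isomorphism, note $\cO_X\to\cO_D$ is surjective with kernel $\cO_X(-D)$, and on fibers $H^0(\cO_X)\to H^0(\cO_D)$ is an isomorphism exactly when $H^1(\cO_X(-D))=0$; by Serre duality $H^1(\cO_X(-D))\cong H^1(K_X(D))^\vee$, which vanishes by hypothesis. Hence $h^0(\cO_D)$ is constant equal to $1$, so by base change $\pi_*\cO_\cD$ is a line bundle and the injection $\cO_{Hilb^\gamma_X}\hookrightarrow\pi_*\cO_\cD$ is an isomorphism.

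Next I would dualize. By relative Serre duality for the local complete intersection (hence Cohen–Macaulay, flat, proper of relative dimension $1$) family $\cD\to Hilb^\gamma_X$, using \eqref{eqref6} which globalizes to $\omega_{\cD/Hilb^\gamma_X}\cong K_X\otimes\cO_\cD(\cD)$ (after pulling $K_X$ back to $Hilb^\gamma_X\times X$), we get $R^1\pi_*\bigl(K_X\otimes\cO_\cD(\cD)\bigr)\cong\bigl(\pi_*\cO_\cD\bigr)^\vee$, and by the previous paragraph this is $\cO_{Hilb^\gamma_X}$. Therefore \eqref{eqref4} is indeed a cosection $\cO b_{Hilb^\gamma_X}\to\cO_{Hilb^\gamma_X}$.

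Finally, to show \eqref{eqref4} agrees with \eqref{eqref5}, I would work pointwise (the global identification then follows by the same diagram chase with $\pi_*$ in place of $H^0$, or by density of the reduced points). Fix $D$ with defining section $s$. Multiplication by $\theta$ on the sequence $0\to\cO_X\to\cO_X(D)\to\cO_D(D)\to 0$ maps it to $0\to K_X\to K_X(D)\to K_X\otimes\cO_D(D)\to 0$, giving a commutative ladder of long exact sequences. The cosection \eqref{eqref5} is the composite $H^1(\cO_D(D))\to H^2(\cO_X)\xrightarrow{\theta}H^2(K_X)=\CC$, while \eqref{eqref4} is $H^1(\cO_D(D))\xrightarrow{\theta}H^1(K_X\otimes\cO_D(D))\to H^1(\omega_D)\cong H^0(\cO_D)^\vee=\CC$. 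The connecting map $H^1(\cO_D(D))\to H^2(\cO_X)$ followed by $\theta$ equals, by commutativity of the ladder, the connecting map $H^1(K_X\otimes\cO_D(D))\to H^2(K_X)$ precomposed with multiplication by $\theta$; and under Serre duality this connecting map $H^1(K_X\otimes\cO_D(D))\to H^2(K_X)$ is dual to the restriction $H^0(\cO_X)\to H^0(\cO_D)$, which is precisely the isomorphism identifying $H^1(\omega_D)\cong H^0(\cO_D)^\vee$ with $H^2(K_X)^\vee{}^\vee=\CC$. So the two composites coincide. The main obstacle I expect is bookkeeping the duality isomorphisms carefully — making sure the identification $H^2(K_X)\cong\CC$ used in \eqref{eqref5} and the identification $H^1(\omega_D)\cong H^0(\cO_D)^\vee\cong\CC$ used in \eqref{eqref4} are matched up by the same trace/Serre-duality pairing, so that the comparison is an honest equality of maps and not merely an equality up to a nonzero scalar; once the compatibility of relative duality with the base-change isomorphisms is set up, the rest is a formal diagram chase.
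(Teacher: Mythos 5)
Your proposal is correct and follows essentially the same route as the paper: the same structure sequence and vanishing hypotheses give $\pi_*\cO_\cD\cong\cO_{Hilb^\gamma_X}$, and the same commutative ladder obtained by multiplying $0\to\cO_X\to\cO_X(D)\to\cO_D(D)\to 0$ by $\theta$ identifies the two cosections. The only difference is one of emphasis: the paper just observes that the connecting map $H^1(\omega_D)\to H^2(K_X)$ is an isomorphism (injective since $H^1(K_X(D))=0$, surjective since $H^2(K_X(D))=0$), whereas you additionally pin it down as the Serre dual of the restriction $H^0(\cO_X)\to H^0(\cO_D)$, which is a reasonable extra care about normalizations rather than a different argument.
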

\begin{proof} 
From the short exact sequence,
\[ 0\lra \cO_{Hilb^\gamma_X\times X}(-\cD)\lra \cO_{Hilb^\gamma_X\times X}\lra \cO_{\cD}\lra 0,\]
we have an exact sequence 
\[ 0\lra \pi_*\cO_{Hilb^\gamma_X\times X}(-\cD)\lra \pi_*\cO_{Hilb^\gamma_X\times X}\lra \pi_*\cO_\cD\] \[ \lra R^1\pi_*\cO_{Hilb^\gamma_X\times X}(-\cD)\cong (R^1\pi_*(K_X\otimes \cO_{Hilb^\gamma_X\times X}(\cD)))^\vee.\]
Since any $D\in Hilb^\gamma_X$ is effective and nonzero, $\pi_*\cO_{Hilb^\gamma_X\times X}(-\cD)=0$ and $R^2\pi_*(K_X\otimes\cO_{Hilb^\gamma_X\times X}(\cD))=0$ by Serre duality. By assumption, $R^1\pi_*K_X\otimes\cO_{Hilb^\gamma_X\times X}(\cD)=0$ and we thus have the isomorphisms $\pi_*\cO_\cD\cong \pi_*\cO_{Hilb^\gamma_X\times X}\cong\cO_{Hilb^\gamma_X}.$
 
Next, consider the commutative diagram of exact sequences
\[\xymatrix{
0 \ar[r] & \cO_X\ar[r]\ar[d]_\theta & \cO_X(D)\ar[r]\ar[d]_\theta & \cO_D(D)\ar[r]\ar[d]_\theta & 0 \\
0\ar[r] & K_X\ar[r] & K_X(D)\ar[r] & \omega_D\ar[r] &0
}\]
where the vertical arrows are all multiplication by $\theta$. Here we used \eqref{eqref6}. By taking cohomology, we obtain a commutative diagram of exact sequences
\[\xymatrix{
&H^1(\cO_D(D)) \ar[r]^\delta\ar[d]_\theta & H^2(\cO_X)\ar[d]_\theta \\
H^1(K_X(D))\ar[r] & H^1(\omega_D) \ar[r] & H^2(K_X)\ar[r] & H^2(K_X(D)).
}\]
Since $D$ is effective and nonzero, $H^2(K_X(D))\cong H^0(\cO_X(-D))^\vee=0$. 
Hence the middle bottom arrow is surjective which is also injective because $H^1(K_X(D))=0$. The right vertical composed with the top horizontal is \eqref{eqref5} and the left vertical is \eqref{eqref4}. It is obvious that this comparison lifts to the whole family $\cD\subset Hilb^\gamma_X\times X$. 
\end{proof}

\begin{rema}\label{rema1} Let $X$ be a smooth projective surface which is minimal of general type. By \cite[VII, \S2]{BPV}, $K_X$ is big and nef. Since nefness and bigness are obviously preserved by numerical equivalence, $\cO_X(D)$ is big and nef for any $D\in Hilb^{k_X}_X$. By \cite[I, Theorem 4.3.1]{Larz}, we have the vanishing
\[ H^i(X,K_X(D))=0,\quad  i=1,2\]
for any $D\in Hilb^{k_X}_X$. \end{rema}

By Lemma \ref{leref1} and Remark \ref{rema1}, we obtain
\begin{coro}\label{coref1} Let $X$ be a smooth projective surface which is minimal and of general type. The two definitions of cosections by \eqref{eqref5} and \eqref{eqref4}  coincide for $Hilb^{k_X}_X$ if $X$ is a minimal surface of general type.\end{coro}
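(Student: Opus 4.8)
The plan is to deduce Corollary \ref{coref1} by combining Lemma \ref{leref1} with Remark \ref{rema1}, so the work consists essentially of verifying that the hypotheses of Lemma \ref{leref1} are met in the setting $\gamma = k_X$ for $X$ minimal of general type. First I would recall from Remark \ref{rema1} that for such $X$ the canonical bundle $K_X$ is big and nef, hence $\cO_X(D)$ is big and nef for every $D \in Hilb^{k_X}_X$ since these properties are numerical and $c_1(\cO_X(D)) = k_X$. Then Kawamata--Viehweg vanishing (cited as \cite[I, Theorem 4.3.1]{Larz} in the remark) gives $H^i(X, K_X(D)) = H^i(X, K_X \otimes \cO_X(D)) = 0$ for $i = 1, 2$. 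In particular the hypothesis $H^1(K_X(D)) = 0$ for all $D \in Hilb^{k_X}_X$ required by Lemma \ref{leref1} holds.

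Next I would check the remaining hypothesis $\gamma \ne 0$: here $\gamma = k_X = c_1(K_X)$, and since $K_X$ is big (indeed $X$ is of general type), $k_X \ne 0$ in $H^2(X,\ZZ)$; equivalently $Hilb^{k_X}_X$ parameterizes nonzero effective divisors, which is exactly what is used in the proof of Lemma \ref{leref1}. With both hypotheses verified for $Hilb^{k_X}_X$, Lemma \ref{leref1} applies verbatim and yields two conclusions: that $\pi_*\cO_{\cD} \cong \cO_{Hilb^{k_X}_X}$, so that \eqref{eqref4} genuinely defines a cosection $\cO b_{Hilb^{k_X}_X} \to \cO_{Hilb^{k_X}_X}$; and that this cosection coincides with the one constructed in \eqref{eqref5}. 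That is the assertion of the corollary.

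The only subtlety, and the point I would be careful about, is the uniformity of the vanishing $H^1(K_X(D)) = 0$ over the whole Hilbert scheme: Lemma \ref{leref1} needs this for every $D$, and it is precisely the numerical (rather than linear-equivalence) invariance of bigness and nefness that delivers this — all members of $Hilb^{k_X}_X$ are numerically equivalent to a canonical divisor, so the vanishing is not just generic but holds pointwise, and hence (by cohomology-and-base-change / semicontinuity applied to the family $\cD \subset Hilb^{k_X}_X \times X$) gives $R^1\pi_*(K_X \otimes \cO_{Hilb^{k_X}_X \times X}(\cD)) = 0$ as used in the proof of the lemma. There is no real obstacle beyond assembling these citations correctly; the corollary is a direct specialization of Lemma \ref{leref1}.

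Thus the proof is short: \emph{This follows immediately from Lemma \ref{leref1} and Remark \ref{rema1}, since for $X$ minimal of general type every $D \in Hilb^{k_X}_X$ satisfies $H^1(K_X(D)) = 0$ and $k_X \ne 0$.} I would write it out in essentially that one-line form, perhaps expanding slightly to remind the reader why $k_X \ne 0$ (bigness of $K_X$) and why the vanishing is uniform (numerical invariance), so that the reduction to Lemma \ref{leref1} is transparent.
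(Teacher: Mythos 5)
Your proposal matches the paper exactly: the paper deduces Corollary \ref{coref1} in one line from Lemma \ref{leref1} and Remark \ref{rema1}, which is precisely your reduction, and your verification of the hypotheses ($H^1(K_X(D))=0$ for all $D\in Hilb^{k_X}_X$ via bigness and nefness of $K_X$, and $k_X\ne 0$ since $X$ is of general type) is the intended content of that deduction. Your extra remarks on uniformity of the vanishing over the Hilbert scheme are a harmless elaboration of what the paper leaves implicit.
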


The scheme structure of the zero (or degeneracy) locus of $\sigma$ is actually reduced.
\begin{lemm} \label{DegIsRed} Let $X$ be a smooth projective surface which is minimal and of general type. The zero locus of the cosection $\sigma$ is the \emph{simple} point $Z=\mathrm{zero}(\theta)$.\end{lemm}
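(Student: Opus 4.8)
The plan is to show that the zero scheme $Z(\sigma)$ of the cosection $\sigma$ on $M := Hilb^{k_X}_X$ is reduced by computing it locally. By Corollary \ref{cor-locdeg} we already know that $Z(\sigma)$ is supported at the single point $Z=\mathrm{zero}(\theta)$, so the only issue is possible nilpotents in the local ring. The idea is to compare, inside the Artinian local ring $A := \cO_{M,Z}$, the ideal $I_\sigma$ cutting out $Z(\sigma)$ with the maximal ideal $\fm \subset A$, and show $I_\sigma = \fm$. Since $\cO b_{M}$ is coherent and $\sigma:\cO b_M\to\cO_M$ is $\cO_M$-linear, $I_\sigma$ is the image of the induced map $\cO b_M\otimes \cO b_M^{\,\vee}\ldots$ — more concretely, choosing a local presentation of $\cO b_M$ near $Z$ by generators $b_1,\dots,b_r$, the ideal $I_\sigma$ is generated by $\sigma(b_1),\dots,\sigma(b_r)\in A$.

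First I would set up the deformation theory explicitly at $D=Z$. Because $X$ is minimal of general type with $p_g>0$, Remark \ref{rema1} gives $H^1(K_X(Z))=H^2(K_X(Z))=0$, so by Corollary \ref{coref1} the two descriptions \eqref{eqref5} and \eqref{eqref4} of $\sigma$ agree, and near $Z$ the fiberwise cosection at a divisor $D$ is the map $H^1(\cO_D(D))\xrightarrow{\theta} H^1(\omega_D)\cong H^0(\cO_D)^\vee \cong \CC$ (using $H^1(K_X(D))=0$, which holds for all $D$ near $Z$ by openness of the vanishing $H^1(K_X(D))=0$, or directly from Remark \ref{rema1} for $\gamma=k_X$). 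Next I would use the fact that the tangent space $T_Z M = H^0(\cO_Z(Z))$ and the obstruction space $Ob_{M,Z}=H^1(\cO_Z(Z))$ are linked by the exact sequence coming from $0\to\cO_X\to\cO_X(Z)\to\cO_Z(Z)\to 0$, namely
\[ 0\to H^0(\cO_X)\to H^0(\cO_X(Z))\to H^0(\cO_Z(Z))\to H^1(\cO_X)\to H^1(\cO_X(Z))\to H^1(\cO_Z(Z))\xrightarrow{\delta} H^2(\cO_X)\to\cdots \]
together with the observation that, by the very construction of $\sigma$, the composite $H^1(\cO_Z(Z))\xrightarrow{\delta}H^2(\cO_X)\xrightarrow{\theta}\CC$ is $\sigma|_Z$, and this is surjective (equivalently nonzero) precisely because the image of $H^1(\cO_X(Z))\to H^1(\cO_Z(Z))$ lies in $\ker\delta$ while $\delta$ itself is nonzero — I should check $\delta\neq 0$, which follows since $Z$ is a reduced effective divisor and $H^2(\cO_X(Z))=H^0(\cO_X(-Z))^\vee=0$ forces $\delta$ to be surjective onto $H^2(\cO_X)=\CC$. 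Hence the fiberwise cosection at $Z$ is already surjective, i.e. one of the generators $\sigma(b_i)$ is a unit in the fiber — but that only says $Z(\sigma)$ is a point, which we knew.

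The real work is to pin down the scheme structure, and the hard part is to show $I_\sigma=\fm$ rather than merely $I_\sigma\supseteq\fm^N$ for some $N$. My approach: embed a first-order neighborhood. Let $v\in T_Z M=H^0(\cO_Z(Z))$ be any tangent vector; it deforms $Z$ to first order to a divisor $Z_v$ over $\Spec\CC[\epsilon]$. The obstruction to the tangent direction $v$ being unobstructed, paired against $\theta$ via $\sigma$, is the derivative $d\sigma|_Z(v)\in\fm/\fm^2 = T_Z^\vee M$; I claim this linear functional is identically zero, i.e. $\sigma$ vanishes to order $\geq 2$ at $Z$ — wait, this would go the wrong way. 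So instead the claim to establish is the opposite: the differential of the function "defining equation of $Z(\sigma)$" is nonzero on every tangent line transverse to... Actually the cleanest route is: since $Z(\sigma)$ is a single point, $A=\cO_{M,Z}$ is Artinian local, and $I_\sigma\subseteq\fm$ trivially (as $\sigma$ vanishes at $Z$ set-theoretically), so it suffices to prove $\fm\subseteq I_\sigma$, i.e. that $\sigma$, viewed as an element of $\fm\otimes(\text{dual of }\cO b)$, together with all multiples, generates $\fm$. This I would do by a deformation-to-the-normal-cone / tangent-space argument: using the results of Green–Lazarsfeld (\cite{GL1,GL2}) on deformations of $H^i$ of line bundles — which the introduction already announces will be used to find local defining equations near $Z$ — one identifies $\cO b_{M}$ locally near $Z$ with the cokernel of an explicit map of free modules whose entries are the Green–Lazarsfeld obstruction classes, and $\sigma$ with pairing against $\theta$; a dimension count then shows the $r$ functions $\sigma(b_i)$ cut out exactly the reduced point, because $\dim T_Z M - \mathrm{rk}(\text{first-order obstruction map})$ matches the expected (zero) virtual dimension only when the scheme is reduced at $Z$. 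I expect this Green–Lazarsfeld local model to be the crux, and it is most likely proved in the next part of the paper; granting it, reducedness of $Z(\sigma)$ is immediate since its length is then forced to be $1$ by the degree computation $\deg[Hilb^{k_X}_X]\virt_{\mathrm{loc}}=(-1)^{\chi(\cO_X)}$ of Theorem \ref{prref1} combined with Example \ref{exref1}. The main obstacle, then, is establishing the explicit local equations for $Hilb^{k_X}_X$ at $Z$ via Green–Lazarsfeld and matching them term-by-term with $\sigma(b_i)$.
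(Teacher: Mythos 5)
There is a genuine gap, and in fact two of your key steps are wrong. First, your claim that the fiberwise cosection at $D=Z$ is surjective is false and contradicts Lemma \ref{lem-DegIsLoc} (which says $\sigma$ vanishes at $D$ iff $0\le D\le Z$, and certainly $Z\le Z$; were $\sigma|_Z$ surjective, $Z$ would not lie in the degeneracy locus and the localized class would vanish). The error is in your Serre duality: $H^2(\cO_X(Z))\cong H^0(K_X\otimes\cO_X(-Z))^\vee=H^0(\cO_X)^\vee=\CC\ne 0$, not $H^0(\cO_X(-Z))^\vee$, so $\delta$ need not surject onto $H^2(\cO_X)$; more to the point, multiplication by $\theta$ \emph{is} the map $H^2(\cO_X)\to H^2(\cO_X(Z))$ in the long exact sequence of $0\to\cO_X\to\cO_X(Z)\to\cO_Z(Z)\to 0$, so the composite $\theta\circ\delta$ is two consecutive maps in that sequence and hence zero. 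Second, your concluding step is circular: you deduce that $\mathrm{zero}(\sigma)$ has length one from the degree computation of Theorem \ref{prref1} via Example \ref{exref1}, but Example \ref{exref1} takes the simple-point statement as a \emph{hypothesis}, and Theorem \ref{prref1} is proved in the paper precisely by combining Example \ref{exref1} with the present lemma. The Green--Lazarsfeld local model you invoke is used in the paper only to find a canonical divisor that is a smooth point of the Hilbert scheme (Proposition \ref{tp}), not to compute the scheme structure of $\mathrm{zero}(\sigma)$, and you never actually carry out the local computation you describe.

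The paper's proof is much more direct and avoids all local analysis: using the second construction \eqref{eqref4} of $\sigma$ (valid here by Corollary \ref{coref1}), the dual of $\sigma$ is multiplication by $\theta$, $\pi_*\cO_\cD\to\pi_*(K_X|_\cD)$. Hence a morphism $T\to Hilb^{k_X}_X$ factors through $\mathrm{zero}(\sigma)$ if and only if the corresponding flat family satisfies $\cD_T\subseteq Z\times T$; since each fiber of $\cD_T$ has the same Hilbert polynomial as $Z$, this forces $\cD_T=Z\times T$, so the functor of points of $\mathrm{zero}(\sigma)$ is that of the reduced point $\{Z\}$. If you want to salvage your approach, you would need to replace the circular final step by an honest verification that the image ideal $I_\sigma$ equals the maximal ideal $\fm_Z$, which is exactly what the functorial argument accomplishes without coordinates.
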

\begin{proof}
By Lemma \ref{lem-DegIsLoc}, we only need to show that $\mathrm{zero}(\sigma)$ is the reduced point $\{Z\}$. 
The dual of  \eqref{eqref4} 
is obtained by multiplication $$\cO_{Hilb^{k_X}_X}=\pi_*\cO_\cD\mapright{\theta} \pi_*(K_X|_\cD)$$
by $\theta$. Hence for any scheme $T$, a morphism $T\to Hilb^{k_X}_X$ factors through $\mathrm{zero}(\sigma)$ if and only if we have a flat family of divisors $\cD_T\to T$ that fits into a diagram
\[\xymatrix{
\cD_T\ar@{^(->}[r]\ar[dr] & Z\times T\ar@{^(->}[r]\ar[d] &X\times T\ar[dl] \\
& T.
}\]
But since $D\in Hilb^{k_X}_X$ and $Z$ have the same Hilbert polynomial, $\cD_T=Z\times T$ and hence the morphism $T\to \mathrm{zero}(\sigma)$ factors through the reduced point $\{Z\}$.
This proves that $\mathrm{zero}(\sigma)$ is indeed the reduced point $\{Z\}$.
\end{proof}

\def\Po{\mathbb{P}^1 }

\section{A proof of Theorem \ref{thref1}} \label{sec4}
In this section we will prove Theorem \ref{prref1} and thus complete a proof of Theorem \ref{thref1}. Let $X$ be a minimal projective surface of general type with $p_g>0$. We will find local defining equations of $Hilb^{k_X}_X$ near canonical divisors and show that there is a canonical divisor $Z$ representing a smooth point of the Hilbert scheme whose dimension at $Z$ has the same parity as $\chi(\cO_X)$. Then Theorem \ref{prref1} will follow directly from Example \ref{exref1}. 

Let $M=Hilb^{k_X}_X$ denote the Hilbert scheme of divisors $D$ with $c_1(\cO_X(D))=k_X=c_1(K_X)\in H^2(X,\ZZ)$. Let $P=Pic^{k_X}(X)$ (resp. $P^{0}=Pic^{0}(X)$) denote the Picard variety of line bundles $L$ on $X$ with $c_1(L)=k_X$ (resp. $c_1(L)=0$). Since a Cartier divisor defines a line bundle, we have a natural morphism
$$\tau: M \to P$$
whose fiber over an $L\in P$ is the complete linear system $\PP H^0(X, L)$.
It is easy to see that $M$ is the fine moduli space of pairs $(L,s)$ where $L\in P$ and $s\in H^0(X,L)-\{0\}$ where two such pairs $(L,s)$ and $(L',s')$ are isomorphic if there is an isomorphism $L\mapright{\cong} L'$ that sends $s$ to $s'$. 

Let $\phi_1,\phi_2,\cdots,\phi_q$ be a basis of $H^1(X,\cO_X)$ and let $T$ be a ball in $H^1(\cO_X)\cong\CC^q=\{(t_1,\cdots,t_q)\}$. Consider the isomorphism $P^0\to P$ defined by $L\mapsto L^{-1}\otimes K_X$ and the exponential map $H^1(\cO_X)\to P^0$ that sends $0$ to the trivial line bundle $\cO_X$.
Pulling back the universal family over $P\times X$ by the composition $H^1(\cO_X)\to P^0\to P$, we obtain a family $\cL\to T\times X$ of line bundles such that $\cL|_{0\times X}\cong K_X$. Let $\pi$, $\rho$ denote the projections from $T\times X$ to $T$ and $X$ respectively. 

Let $D^\bullet$ denote the complex
\beq \label{eq-GL2} 0\lra H^0(\cO_X)\otimes \cO_T \lra H^1(\cO_X)\otimes \cO_T \lra H^2(\cO_X)\otimes \cO_T\lra 0\eeq where the differentials are $\lambda\mapsto \lambda\wedge \sum_k t_k\phi_k$. The following is a special case of \cite[Theorem 3.2]{GL2} for the dual $R^2\pi_*(\cL^{-1}\otimes \rho^*K_X)$ of $\pi_*\cL$.
\begin{lemm}\label{thref-GL2} Under the above assumptions, we have isomorphisms
$$(R^i\pi_*(\cL^{-1}\otimes \rho^*K_X))_0\cong \cH^i(D^\bullet)_0$$ where the subscript $0$ means stalk at zero.
\end{lemm}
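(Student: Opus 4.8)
\emph{Proof proposal.} The plan is to obtain the lemma as a direct application of \cite[Theorem 3.2]{GL2} to the family $\cN:=\cL^{-1}\otimes\rho^*K_X$ on $T\times X$; the work consists entirely of matching the two set-ups, after which there is nothing left to prove. First I would record the fibrewise shape of $\cN$. By construction $\cL$ is the pull-back of a Poincar\'e bundle on $P\times X$ along the composition of the exponential map $\exp:H^1(\cO_X)\to P^0$ with the isomorphism $P^0\to P$, $L\mapsto L^{-1}\otimes K_X$. Hence $\cL|_{t\times X}\cong\exp(t)^{-1}\otimes K_X$ for $t\in T$, and therefore $\cN|_{t\times X}\cong\exp(t)\in P^0$; in particular $\cN|_{0\times X}\cong\cO_X$. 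Thus, after restriction to the ball $T$, $\cN$ is exactly the pull-back under the exponential (universal covering) map of a suitably normalized Poincar\'e line bundle on $P^0\times X$ — precisely the family of topologically trivial line bundles to which the local cohomology computation of Green and Lazarsfeld applies.

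Next I would unwind \cite[Theorem 3.2]{GL2} for this family. Its content is that the germ at $0$ of $R^i\pi_*\cN$, expressed in the exponential coordinates $t_1,\dots,t_q$ on $H^1(\cO_X)$, is the $i$-th cohomology sheaf of the complex with terms $H^i(X,\cO_X)\otimes\cO_T$ and differential equal to cup product with the tautological class $\sum_k t_k\phi_k\in H^1(X,\cO_X)\otimes\cO_T$, where $\phi_1,\dots,\phi_q$ is the chosen basis of $H^1(\cO_X)$ and $(t_1,\dots,t_q)\in T$ is identified with $\sum_k t_k\phi_k$. What makes this differential cup product with $\sum_k t_k\phi_k$ (rather than with some other linear form) is that the derivative of $\exp$ at the origin is the identity of $H^1(\cO_X)$, so that the Kodaira--Spencer class of $\cN$ at $0$ in the direction $\sum_k t_k\,\partial/\partial t_k$ is $\sum_k t_k\phi_k$, and the first-order variation of $H^i(\cO_X)$ along $P^0$ is cup product with the Kodaira--Spencer class. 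Since $X$ is a surface we have $H^i(\cO_X)=0$ for $i\ge 3$, so this complex is exactly the three-term complex $D^\bullet$ of \eqref{eq-GL2} with differential $\lambda\mapsto\lambda\wedge\sum_k t_k\phi_k$, and \cite[Theorem 3.2]{GL2} gives the asserted isomorphism of stalks $(R^i\pi_*\cN)_0\cong\cH^i(D^\bullet)_0$.

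The one point that needs care is the matching of conventions, which is bookkeeping rather than a genuine obstacle: \cite{GL2} state their theorem for the analytic germ (equivalently the formal completion) of $P^0$ at the identity, so I would shrink $T$ if necessary and use that $\exp$ is a biholomorphism near $0$ to transport the germ statement back to $T$; and I would note that the twist by $\rho^*K_X$ in the definition of $\cN$ is inserted precisely so that $\cN|_{0\times X}\cong\cO_X$, which is the normalization required in \cite{GL2}. Note that only the smoothness and properness of $X$ (indeed only that $X$ is compact K\"ahler) enter here, not the hypothesis that $X$ be minimal of general type; the entire content of the lemma is contained in \cite[Theorem 3.2]{GL2}, and the proof is the translation described above.
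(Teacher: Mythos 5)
Your proposal is correct and follows essentially the same route as the paper, which likewise obtains the lemma as a direct specialization of \cite[Theorem 3.2]{GL2} to the family $\cL^{-1}\otimes\rho^*K_X$ of topologically trivial line bundles pulled back along the exponential map; your verification that $\cN|_{0\times X}\cong\cO_X$ and that the differential is cup product with $\sum_k t_k\phi_k$ is exactly the bookkeeping the paper leaves implicit. No gaps.
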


In fact, the proof of \cite[Theorem 3.2]{GL2}  shows the complex 
 (\ref{eq-GL2}) and the derived object $R^\cdot\pi\lsta(\cL^{-1}\otimes\rho^\ast K_X)$ are quasi-isomorphic over the stalk at $0$. 

As a corollary of \cite[Theorem 3.2]{GL2}, Green and Lazarsfeld also show that \emph{all higher obstructions to deforming a cohomology class vanish automatically along straight lines} (\cite[Corollary 3.3]{GL2}). Hence the first order obstructions define the tangent cone of $M$ at a canonical divisor. 

Let $p=p_g=\dim H^0(K_X)=\dim H^2(\cO_X)>0$. Let us choose a basis $\{\psi_i\}$ of $H^2(\cO_X)\cong \CC^{p}$ and write $$\phi_j\wedge \phi_k=\sum_i a_{ijk}\psi_i$$ with $a_{ijk}=-a_{ikj}\in \CC$.
Then the complex \eqref{eq-GL2} is
\beq\label{eq-GL3} 0\lra \cO_T\mapright{A_0} \cO_T^q\mapright{A_1} \cO_T^p\lra 0\eeq
where $A_0$ is the transpose $(t_1,\cdots, t_n)$ and $$A_1=(\sum_{k}a_{ijk}t_k)_{1\le i\le p, 1\le j\le q}.$$ Upon dualizing, we find that the stalk of $\pi_*\cL$ at $0$ is given by the zeroth cohomology of the complex
\beq\label{eq-GL4} 0\lra \cO^p_T\mapright{A_1^t} \cO_T^q\mapright{A_0^t} \cO_T\lra 0\eeq
where $A_0^t$ and $A_1^t$ are the transpose matrices of $A_0$ and $A_1$ respectively. 

Obviously, a section $s\in H^0(L)$ can be thought of as a closed subscheme of the total space of the line bundle $L$ over $X$. Let $\cU\to P\times X$ be the universal line bundle and consider its composition $\cU\to P\times X\to P$ with the projection to $P$. By the relative Hilbert scheme construction for  $\cU\to P$, we have the moduli scheme $\tilde{M}$ of pairs $(L\in P, s\in H^0(L))$.  Multiplication on $s$ gives us a free action of $\CC^*$ so that $$M=\tilde{M}-P/\CC^*$$ where $P$ is identified with the locus $\{(L\in P, 0\in H^0(L))\}$ of zero sections. 

Fix a basis $\psi_1,\cdots, 
\psi_p$ for $H^0(K_X)$ and let $z_1,\cdots,z_p$ be coordinates of $H^0(K_X)
\cong \CC^p=\{(z_1,\cdots,z_p)\}.$
By \eqref{eq-GL4}, $\tilde{M}$ in an analytic neighborhood of $H^0(K_X)=\{(K_X,s\in H^0(K_X))\}$ is
\beq\label{eq-defeqgam} \Gamma=\{(z_1,\cdots,z_p,t_1,\cdots,t_q)\,|\, \sum_{i,k}z_ia_{ijk}t_k=0\text{ for } 1\le j\le q\}\sub \CC^p\times T\subset \CC^{p}\times\CC^{q}.\eeq

Let $$\xi:\Gamma\hookrightarrow \CC^p\times\CC^q\to \CC^p$$ be given by the projection to the $z$-coordinates. By \eqref{eq-defeqgam}, for a fixed nonzero $z=(z_1,\cdots, z_p)\in \CC^p$, the fiber over $z$ is 
\[ \xi^{-1}(z)=\mathrm{ker} B\subset \CC^q\text{ where } B_{jk}=\sum_iz_ia_{ijk}.\] 
Since $a_{ijk}=-a_{ikj}$, we find that the $q\times q$ matrix $B$ is skew-symmetric. Therefore the rank of $B$ is even and hence $\dim \xi^{-1}(z)\le q$ has the same parity as $q$. 

By upper semi-continuity, there is an open subset $V$ of $\CC^p-\{0\}$ such that, for arbitrary $z\in V$, $\dim \xi^{-1}(z)$ is minimal possible and $\xi^{-1}(V)$ is an open subscheme of a vector bundle over $V$ whose rank has the same parity as $q$. In particular, $\tilde{M}$ is smooth near $V$. Finally we choose a point $Z$ in $(\CC^p\times 0)\cap \xi^{-1}(V)$. Then $Z$ gives us a canonical divisor in $X$ whose corresponding point in $M$ is smooth (after taking quotient by the free $\CC^*$ action). The dimension of $M$ at $Z$ has the parity of $p+q-1$ which equals the parity of $\chi(\cO_X)=p-q+1$. In summary we proved the following.

\begin{prop}\label{tp} 
Let $X$ be a smooth minimal projective surface of general type with $p_g>0$. Then there is a canonical divisor $Z$ in $X$ which represents a smooth point in the Hilbert scheme $Hilb^{k_X}_X$. Furthermore, the dimension of $Hilb^{k_X}_X$ at $Z$ has the same parity as $\chi(\cO_X)$. 
\end{prop}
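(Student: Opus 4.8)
The plan is to write down local analytic equations for $M := Hilb^{k_X}_X$ near a canonical divisor and then exhibit one canonical divisor at which these equations cut out a smooth germ whose dimension has the predicted parity. First I would recast the problem: $M$ is the fine moduli space of pairs $(L,s)$ with $L \in P := Pic^{k_X}(X)$ and $0 \ne s \in H^0(X,L)$, fibred over $P$ by $\tau$ with fibre $\PP H^0(X,L)$ over $L$. It is cleaner to work with the auxiliary space $\tilde M$ of \emph{all} pairs $(L,s)$, now allowing $s = 0$, constructed as a relative Hilbert scheme for the universal line bundle $\cU \to P$ over $P$; the group $\CC^*$ acts by rescaling $s$, freely away from the zero-section locus $P \subset \tilde M$, and $M = (\tilde M - P)/\CC^*$. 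Thus it suffices to find a canonical divisor $Z$, corresponding to a nonzero section of $K_X$, at which $\tilde M$ is smooth of local dimension $\equiv p + q \pmod 2$, where $p = p_g = h^2(\cO_X)$ and $q = h^1(\cO_X)$; the free $\CC^*$-action then drops the dimension by one and $p + q - 1 \equiv p - q + 1 = \chi(\cO_X) \pmod 2$.

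Next I would linearise the line-bundle deformations. Fix bases $\phi_1,\dots,\phi_q$ of $H^1(\cO_X)$ and $\psi_1,\dots,\psi_p$ of $H^2(\cO_X) \cong H^0(K_X)^\vee$, take the exponential chart $T \subset H^1(\cO_X)$ mapping to $Pic^0(X)$ and then to $P$ by tensoring with $K_X$, and pull back the universal family to obtain $\cL \to T \times X$ with $\cL|_0 \cong K_X$. The key input is Green--Lazarsfeld. By Lemma~\ref{thref-GL2}, the stalk at $0$ of $R^\bullet\pi_*(\cL^{-1}\otimes\rho^*K_X)$, hence of $\pi_*\cL$, is computed by the dual of the linear complex $D^\bullet$ whose differentials are $\lambda \mapsto \lambda \wedge \sum_k t_k\phi_k$; and --- the crucial point --- \cite[Corollary~3.3]{GL2} tells us that all higher obstructions to deforming a section vanish along straight lines in $T$, so this linear-algebra model genuinely describes the germ of $\tilde M$ and not merely a tangent cone. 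Writing $\phi_j \wedge \phi_k = \sum_i a_{ijk}\psi_i$ with $a_{ijk} = -a_{ikj}$ and using coordinates $z = (z_1,\dots,z_p)$ on $H^0(K_X)$, this identifies $\tilde M$ near $H^0(K_X) = \{(K_X,s)\}$ with the subscheme
\[ \Gamma = \Bigl\{\, (z,t) \in \CC^p \times T \ \Big|\ \sum_{i,k} a_{ijk}\,z_i\,t_k = 0, \quad 1 \le j \le q \,\Bigr\}. \]

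Then I would exploit the skew-symmetry. Projecting $\xi : \Gamma \to \CC^p$, $(z,t) \mapsto z$, the fibre over a fixed $z \ne 0$ is the kernel of the $q \times q$ matrix $B(z) = \bigl(\sum_i a_{ijk}z_i\bigr)_{j,k}$, which is skew-symmetric, hence of even rank; so $\dim\xi^{-1}(z)$ always has the parity of $q$. By upper semicontinuity of fibre dimension there is a nonempty Zariski-open $V \subset \CC^p - \{0\}$ on which $\dim\xi^{-1}(z)$ is minimal, and $\xi^{-1}(V)$ is then an open subscheme of a vector bundle over $V$ of rank $\equiv q \pmod 2$; in particular $\Gamma$, hence $\tilde M$, is smooth along $\xi^{-1}(V)$. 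Since $\CC^p \times \{0\} \subset \Gamma$ and $\xi(\CC^p \times \{0\}) = \CC^p$, the set $(\CC^p \times \{0\}) \cap \xi^{-1}(V) = V \times \{0\}$ is nonempty; any point $Z$ of it corresponds to a nonzero section of $K_X$, hence to a canonical divisor, represents a smooth point of $\tilde M$ of local dimension $\equiv p + q \pmod 2$, and therefore, after the $\CC^*$-quotient, a smooth point of $M$ of dimension $\equiv p - q + 1 = \chi(\cO_X) \pmod 2$. Together with Lemma~\ref{DegIsRed}, Corollary~\ref{cor-locdeg} and Example~\ref{exref1} applied to the obstruction bundle on this smooth germ, this yields $\deg[Hilb^{k_X}_X]\virt = (-1)^{\chi(\cO_X)}$, i.e. Theorem~\ref{prref1}.

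The step I expect to be the main obstacle is the middle one: checking that the quadratic model $\Gamma$ faithfully represents the germ of $\tilde M$, with no corrections from higher-order terms. This is precisely where the full strength of Green--Lazarsfeld's deformation theory is needed --- the unobstructedness of deformations of a cohomology class along lines, \cite[Corollary~3.3]{GL2} --- together with the observation that we only need to manufacture a \emph{single} well-behaved point rather than control the global geometry of $Hilb^{k_X}_X$. Minimality and general type enter only to guarantee the vanishing $H^i(K_X(D)) = 0$ for $i = 1,2$ and all $D \in Hilb^{k_X}_X$ (Remark~\ref{rema1}), which underlies the sheaf-theoretic identifications used throughout.
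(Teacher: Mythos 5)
Your proposal is correct and follows essentially the same route as the paper's own argument: the auxiliary space $\tilde{M}$ of pairs $(L,s)$ with the free $\CC^*$-action, the Green--Lazarsfeld linear model $\Gamma$ via Lemma \ref{thref-GL2} and the unobstructedness along lines from \cite[Corollary 3.3]{GL2}, the skew-symmetry of $B(z)$ forcing even rank, and the upper-semicontinuity choice of $V$ with the final parity count $p+q-1\equiv p-q+1=\chi(\cO_X)\pmod 2$. No substantive differences from the paper's proof.
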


By the cosection localization in \S\ref{sec3}, the virtual cycle of $Hilb^{k_X}_X$ is localized to any open neighborhood of $Z$. Since $Z$ is a smooth point, we can excise the singular part of $Hilb^{k_X}_X$ and may assume that $Hilb^{k_X}_X$ is a smooth variety whose dimension has the same parity as $\chi(\cO_X)$. Since the virtual dimension is zero, the obstruction sheaf is a locally free sheaf $E$ whose rank equals the dimension of $Hilb^{k_X}_X$. Since $M=Hilb^{k_X}_X$ is smooth, $0:\mathbb{T}_M\to E$ is a perfect obstruction theory. Now Theorem \ref{prref1} follows immediately from Example \ref{exref1} and Lemma \ref{DegIsRed}. This completes our proof of Theorem \ref{thref1}.


\bibliographystyle{amsplain}

\end{document}